\numberwithin{equation}{section}
\newcommand{\inProd}[2]{\langle #1 , #2 \rangle }
\newcommand{\SOC}[2]{{\mathcal{L}^{#2} _{#1}}}
\newcommand{\spanVec}{\operatorname{span}}
\newcommand{\norm}[1]{\lVert{#1}\rVert}
\newcommand{\innprod}[2]{\langle #1 , #2 \rangle }
\newcommand{\stdCone}{ {\mathcal{K}}}
\newcommand{\stdSpec}{ S}
\newcommand{\matRank}{{\mathrm{ rank } \,}}	
\newcommand{\matRange}{{\mathrm{ range } \,}}
\renewcommand{\Re}{\mathbb{R}}
\renewcommand{\S}{\mathcal{S}}                    
\newcommand{\tr}{\mathrm{tr}}
\newcommand{\jAlg}{\mathcal{E}}
\newcommand{\rank}{\operatorname{rank}}
\newcommand{\proj}{\operatorname{proj}}
\newcommand{\dist}{\operatorname{dist}}
\newcommand{\orR}[1]{\Re^{#1}_{\downarrow}}
\newcommand{\ep}{\varepsilon}
\newcommand{\Diag}{\operatorname{Diag}}
\newcommand{\diag}{\operatorname{diag}}
\newcommand{\dom}{\operatorname{dom}}
\theoremstyle{definition}
\newtheorem{definition}{Definition}[section]
\newtheorem{example}[definition]{Example}
\theoremstyle{theorem}
\newtheorem{proposition}[definition]{Proposition}
\newtheorem{corollary}[definition]{Corollary}
\newtheorem{theorem}[definition]{Theorem}
\newtheorem*{proposition*}{Proposition}
\theoremstyle{remark}
\newtheorem{remark}[definition]{Remark}
\title{Eigenvalue programming beyond matrices}
\author{
	Masaru Ito%
		\thanks{Department of Mathematics, College of Science and Technology, Nihon University,
			1-8-14 Kanda-Surugadai, Chiyoda-Ku, Tokyo 101-8308, Japan.
			This author
			was supported partly by the JSPS Grant-in-Aid for Early-Career Scientists 	21K17711.
			Email: (\texttt{ito.masaru@nihon-u.ac.jp)}.}
	\and
	Bruno F. Louren\c{c}o%
\thanks{Department of Fundamental Statistical Mathematics, Institute of Statistical Mathematics, Japan.
This author
	was supported partly by the JSPS Grant-in-Aid for Early-Career Scientists 	23K16844. Email:~(\texttt{bruno@ism.ac.jp}).}
}
\begin{document}
\maketitle
\begin{abstract}
	In this paper we analyze and solve eigenvalue programs, which  consist of the task of  minimizing a function subject to constraints on the ``eigenvalues'' of the decision variable.
	Here, by making use of the FTvN systems framework introduced by Gowda,  we interpret ``eigenvalues'' in a broad fashion going beyond the usual eigenvalues of matrices. 
	This allows us to shed new light on classical problems such as inverse eigenvalue problems and also leads to new applications.
	In particular, after analyzing and developing a simple projected gradient algorithm for general eigenvalue programs, 
	we show that eigenvalue programs can be used to express what we call  \emph{vanishing quadratic constraints}. A {vanishing quadratic constraint} requires that a given system of convex quadratic inequalities be satisfied and at least a certain number of those inequalities must be tight.
	As a particular case, this includes the problem of  finding a point $x$ in the intersection of $m$ ellipsoids in such a way that $x$ is also in the boundary of at least $\ell$ of the ellipsoids, for some fixed $\ell > 0$.
	At the end, we also present some numerical experiments.
\end{abstract}
{\bfseries Keywords:}
Eigenvalue programming, eigenvalue optimization, FTvN systems, vanishing quadratic constraints
\section{Introduction}
In this paper, we consider problems of the following 
form.
\noindent\begin{align*}
{\min _{x \in \jAlg}} & \quad f(x)
\label{eq:eig_prog}\tag{Eig-Prog}	\\ 
\mbox{subject to} & \quad \lambda(x) \in \mathcal{C},
\end{align*}
where  $f:\jAlg \to \Re$ is a smooth function, 
$\jAlg$ is a finite dimensional Euclidean space, $\mathcal{C} \subseteq \Re^r$ and $(\jAlg,\Re^r,\lambda)$ is a so-called \emph{FTvN system}, which was originally introduced by Gowda \cite{G19}.
Informally, $\lambda:\jAlg \to \Re^r$ has the role of ``eigenvalue map'' and \eqref{eq:eig_prog} corresponds to the task of minimizing $f$ subject to the constraint that the ``eigenvalues'' of $x$ must belong to a certain set $\mathcal{C}$.
Because of this, we will call \eqref{eq:eig_prog} an \emph{eigenvalue program} and this will be the main subject matter of this work.

The motivation  for this work is to provide a broad, yet simple framework, for solving optimization problems which requires constraints on the eigenvalues of $x$. 
In particular, we will cover the case where $\jAlg$ is $\S^n$ (the space of $n\times n$ real symmetric matrices) and $\lambda$ is the usual matrix eigenvalue map, where the eigenvalues are ordered from largest to smallest. 
%This problem, of course, has a long history. 
%However, it may be fair to say that many of the previous works seem to be focused on controlling the the largest or the smallest eigenvalues (or singular values). \todo{Adjust the tone of this part.}
This includes the particular case of rank constraints, since we may force an $n\times n$ symmetric positive semidefinite matrix to have rank less or equal than $r$ by requiring that the $n-r$ smallest eigenvalues vanish.
And, slightly more generally, we may constrain the rank of a general matrix by forcing the smaller singular values to vanish.
In this work, however, we present techniques to control the range of the whole eigenvalue map, not only the smallest/largest eigenvalues.%, this leads to constraints that, relatively speaking, have been less studied in the literature. 

In addition, as we will discuss in Section~\ref{sec:ex}, there are also other interesting problems that can be modelled as an {eigenvalue program} beyond matrix problems. 
For example, suppose that we have $m$ ellipsoids and we would like to find an intersection point that is at the boundary of at least $\ell$ of those ellipsoids, for some $\ell \leq m$. Surprisingly, such a problem can also be cast as an eigenvalue program in an appropriate setting without making use of the usual matrix eigenvalues.
More generally, given $m$ quadratic inequalities, the problem of finding points that satisfy at least $\ell$ of those inequalities exactly, can also be cast as an eigenvalue program, as we will discuss in Section~\ref{sec:ex}.

The problem in \eqref{eq:eig_prog} at first glance seems highly nonconvex, because, as exemplified previously, we may use it to express  rank constraints.  
For the sake of concreteness, 
let $P$ be a polyhedral set in $\Re^r$,
let $\jAlg = \S^r$ be the space of real $r\times r$ symmetric matrices and $\lambda$ the usual eigenvalue map. If we set $\mathcal{C} = (\Re_+^{s} \times \{0\}^{r-s})\cap P$, the feasible set of \eqref{eq:eig_prog} consists of positive semidefinite matrices with rank less or equal than  $s$  such that their eigenvalues are in $P$.
Clearly, $\lambda^{-1}(\mathcal{C})$ (i.e., the feasible set of \eqref{eq:eig_prog}) will be nonconvex in general. Now, \emph{nonconvex} does not mean \emph{hard}, but it often means that we may need to get more creative in order to better understand the problem.
 %and, from an optimization point of view, it seems to be quite hard to handle directly. \todo{Maybe remove the comments on the difficulty.}

In \cite{G19}, Gowda made several fundamental contributions and he showed that linear optimization over ``$\lambda^{-1}(\mathcal{C})$'' can be essentially converted to a linear optimization problem over $\mathcal{C}$ and, similarly, in order to project onto 
 ``$\lambda^{-1}(\mathcal{C})$'', it is enough to know how to project a vector $v \in \Re^r$ onto $\mathcal{C}$. 
This is notable, because in the example discussed just now, linear optimization over $\mathcal{C}$ would correspond to a \emph{linear program}, in stark contrast to linear optimization over $\lambda^{-1}(\mathcal{C})$.
 
With these points in mind, our motivation for this work is to leverage the theoretical findings of \cite{G19} into a simple yet practical algorithm and showcase some novel modelling applications of eigenvalue programs.
Our contributions are as follows.
\begin{itemize}
	\item We use the FTvN system framework to analyze and implement a simple projected gradient algorithm for solving \eqref{eq:eig_prog}, see Section~\ref{sec:projg}.
	\item We discuss some of the modelling capabilities of \eqref{eq:eig_prog}, see Section~\ref{sec:ex}. We start by revisiting \emph{inverse eigenvalue problem} but with general eigenvalue constraints.
	Next, we show that when $\jAlg$ corresponds to a Jordan algebra associated to a direct product of second-order cones, we can solve feasibility problems with the so-called \emph{vanishing quadratic constraints}, which require that a certain number of convex quadratic inequalities be satisfied with equality. 
	In particular, given $m$ ellipsoids, we can model and solve the problem of finding an intersection point that is in the boundary of at least $\ell$ of those ellipsoids using {vanishing quadratic constraints}, see Section~\ref{sec:ellipsoids}.
\end{itemize}
This work is divided as follows. 
In Section~\ref{sec:prel}, we set up the notation and review notions from variational analysis. In Section~\ref{sec:ftvn}, we present a self-contained account of the theory of FTvN systems and discuss some fundamental examples. 
In Section~\ref{sec:projg} we discuss and analyze a projected gradient algorithm for solving \eqref{eq:eig_prog}. In Section~\ref{sec:ex} we present some modelling examples together with numerical experiments. Finally, 
in Section~\ref{sec:conc} we conclude this paper.

\section{Preliminaries}\label{sec:prel}
Let $\jAlg$ be a finite dimensional real vector space equipped with an inner product $\langle\cdot,\cdot\rangle$.
We denote by $\|\cdot\|=\sqrt{\innprod{\cdot}{\cdot}}$ the induced norm on $\jAlg$.
We also write $\norm{w}_2=\sqrt{w^Tw}$ for $w \in \Re^r$.
For a set $S \subset \jAlg$, we define
\[
\dist(x,S) \coloneqq \inf_{z \in S}\norm{x-z},\quad
\proj_S(x) \coloneqq \{z \in S \mid \norm{z-x} = \dist(x,S)\}.
\]
We denote the indicator function of $S$ by $\delta _S$.
For a vector $x \in \Re^r$, we denote by $x^\downarrow \in \Re^r$ the vector that corresponds to ordering the elements of $x$ from largest to smallest, so that 
$x^\downarrow _1 \geq \cdots \geq x^\downarrow_n$. We also write $S_\downarrow = \{x^\downarrow ~|~ x \in S\}$ for a set $S \subseteq \Re^r$.
 
\paragraph{Some variational analysis}
Before we move forward we recall some notions from variational analysis, for more details see \cite{RW,Mo18}. Let $f : \jAlg \to \Re \cup \{+\infty\}$ be a function. 
We say that $d$ is a \emph{regular subgradient of $f$ at $x$} if
\begin{equation}\label{eq:reg_sub}
\liminf_{\substack{v\to 0 \\ v\neq 0} } \frac{f(x+v) - f(x) - \inProd{d}{v}}{\norm{v}} \geq 0.
\end{equation} 
The set of regular subgradients of $f$ at $x$ is called the \emph{regular subdifferential} of $f$ at $x$ and is denoted by $\hat \partial f(x)$. 
 
We say that $d$ is an \emph{approximate subgradient} (also called \emph{limiting subgradient}) of $f$ at $x$ if
there are sequences $\{x^k\}$, $\{d^k\} \subset \jAlg$ such that $d^k \in \hat \partial f(x^k)$ holds for every $k$ and the following limits hold:
\[
x^k \to x, \qquad f(x^k)\to f(x), \qquad d^k \to d.
\]
The set of approximate subgradients of $f$ at $x$ is denoted by 
$\partial f(x)$ and is called the \emph{limiting subdifferential of $f$ at $x$}.  We also define
$\dom \partial f(x) = \{x \in \jAlg \mid \partial f(x) \neq \emptyset\}$.
If $x$ is such that $0 \in \partial f(x)$, then $x$ is said to be a \emph{critical point}.
 
Considering different subdifferentials of the indicator function $\delta_S$ of a given subset $S \subseteq \jAlg$ leads to different notions of normal cones. In particular, the regular and the limiting normal cones of a set $S \subseteq \jAlg$ at some $x \in S$ are given by $\hat N _S(x) \coloneqq \hat \partial \delta_{S}(x)$ and $N _S(x) \coloneqq \partial \delta_{S}(x)$, respectively. For more details, 
see \cite[Chapters~6, 8 and Exercise 8.14]{RW}.
%Conversely, it is possible to the different notions of normal cones first and use them to define the corresponding subdifferentials by looking at normal cones of epigraphs, e.g., see \cite[Section~1.3]{Mo18}.

\section{FTvN systems}\label{sec:ftvn}
The notion of a Fan-Theobald-von Neumann system introduced by Gowda is a broad generalization of several notions developed throughout the literature to handle objects that behave similarly to the usual matrix eigenvalue map \cite{G19,JJ23}. 
%
%For $x \in \jAlg$, we associate its `eigenvalues' $\lambda(x)=(\lambda_1(x),\ldots,\lambda_r(x)) \in \Re^r$.
%To formalize this, we recall the notion of a Fan-Theobald-von Neumann system \cite{G19}.
\begin{definition}\label{def:FTvN}
For a map $\lambda:\jAlg\to\Re^r$, we say that $(\jAlg, \Re^r, \lambda)$ is a \emph{Fan-Theobald-von Neumann (FTvN) system} if
\begin{enumerate}[$(i)$]
\item $\|\lambda(x)\|_2=\|x\|$, $\forall x \in \jAlg$
\item $\inProd{x}{y} \leq \inProd{\lambda(x)}{\lambda(y)}$, $\forall x,y \in \jAlg$
\item For any $c \in \jAlg$ and $\mu \in \lambda(\jAlg)$, the following set is nonempty:
\begin{equation}\label{eq:FTvN-U}
U(c,\mu) := \{z \in \jAlg \mid \lambda(z)=\mu,~ \inProd{c}{z} = \inProd{\lambda(c)}{\lambda(z)}\}.
\end{equation}
\end{enumerate}
\end{definition}
\begin{remark}
The definition given in \cite{G19,JJ23} is more general than Definition~\ref{def:FTvN}, because it is considered over arbitrary real inner product spaces, while here we restrict ourselves to the finite dimensional case where one of the spaces is $\Re^r$. Also, item~$(iii)$ is stated in a slightly different manner.
\end{remark}
The first two items in Definition~\ref{def:FTvN} are relatively natural: they require that the eigenvalue map $\lambda$ be compatible with the norm used in $\jAlg$ and that the inner product satisfy an inequality analogous to a classical inequality over symmetric matrices (e.g., \cite[Theorem~2.2]{Le96}).
The third item is somewhat more technical, but can be seen as complementing item $(ii)$ as follows: if $c, y \in \jAlg$ are fixed, item~$(ii)$ implies that $\inProd{c}{y} \leq \inProd{\lambda(c)}{\lambda(y)}$. 
However, 
applying item~$(iii)$ to $c$ and $\lambda(y)$, we get that this inequality becomes tight for at least one $z \in \jAlg$ that has the same eigenvalues as $y$. 
%Again, in the case of symmetric matrices,   item~$(iii)$ holds , see Example~\ref{ex:sym}.
%informally, we diagonalize $c$ and construct a $z$ by ``inserting the eigenvalues of $y$'' into the spectral decomposition of $c$. For more details, see Section~\ref{sec:jalg}.

For FTvN systems, the projected gradient algorithm discussed in this paper relies on evaluating the map $\lambda(\cdot)$ and being able to obtain an element of the set $U(c,\mu)$.
For many cases, this is done by ``spectral decompositions'' as we will see later.

We define the \emph{rank} of $x \in \jAlg$ by
\begin{equation}\label{eq:rank}
\rank(x) \coloneqq \text{ number of nonzero elements in } \lambda(x). 
\end{equation}
For a FTvN system $(\jAlg, \Re^r, \lambda)$, a \emph{spectral set} in $\jAlg$ is a subset $\stdSpec$ of $\jAlg$ of the form
\[
\stdSpec = \lambda^{-1}(Q) \text{ for some } Q \subseteq \Re ^r.
\]
Observe that any spectral set $\stdSpec = \lambda^{-1}(Q)$ satisfies
\begin{equation}\label{eq:spec-prop}
\lambda(\stdSpec) = \lambda(\jAlg) \cap Q,
\quad
\stdSpec = \lambda^{-1}(\lambda(\stdSpec)).
\end{equation}

%When we are given a FTvN system $(\jAlg,\Re^r,\lambda)$, it is not necessarily the case that $\lambda(\jAlg) = \Re^r$. Nevertheless, 
A very useful property of FTvN systems is that some optimization problems over $\jAlg$ can be equivalently reformulated over $\Re^r$ which may lead to significant simplifications.

\begin{proposition}[Gowda \cite{G19}]\label{prop:G}
Let $(\jAlg,\Re^r,\lambda)$ be a FTvN system and $\stdSpec$ be a spectral set in $\jAlg$. For any real-valued function $\varphi:\Re^r \to \Re$ and $c \in \jAlg$, we have
\begin{equation}\label{eq:spec-sup-reduce}
\sup_{x \in \stdSpec}\{\innprod{c}{x} + \varphi(\lambda(x))\} = \sup_{w \in \lambda(\stdSpec)}\{\innprod{\lambda(c)}{w}+\varphi(w)\}.
\end{equation}
Morevoer, $x^* \in \stdSpec$ attains the supremum of the left-hand side if and only if $\innprod{c}{x^*}=\innprod{\lambda(c)}{\lambda(x^*)}$ and $\lambda(x^*)$ attains the supremum of the right-hand side. 
In other words, this is if and only if $x^* \in U(c,w^*)$ for some $w^* \in \lambda(S)$ that attains the supremum of the right hand side.
\end{proposition}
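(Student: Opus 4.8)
The plan is to establish the claimed identity by proving the two inequalities separately and then to read off the attainment characterization by recording exactly when each inequality is tight. For $\leq$, fix any $x \in \stdSpec$. Property~$(ii)$ of Definition~\ref{def:FTvN} gives $\innprod{c}{x} \leq \innprod{\lambda(c)}{\lambda(x)}$, whence $\innprod{c}{x} + \varphi(\lambda(x)) \leq \innprod{\lambda(c)}{\lambda(x)} + \varphi(\lambda(x))$. Since $\lambda(x) \in \lambda(\stdSpec)$, the right-hand side is at most $\sup_{w \in \lambda(\stdSpec)}\{\innprod{\lambda(c)}{w}+\varphi(w)\}$, and taking the supremum over $x \in \stdSpec$ yields $\leq$.

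Next I would prove the reverse inequality $\geq$, which is where property~$(iii)$ is essential and where the main obstacle lies. Fix $w \in \lambda(\stdSpec)$. By \eqref{eq:spec-prop} we have $w \in \lambda(\jAlg)$, so $U(c,w)$ is nonempty; choosing any $z \in U(c,w)$ gives $\lambda(z) = w$ and $\innprod{c}{z} = \innprod{\lambda(c)}{\lambda(z)} = \innprod{\lambda(c)}{w}$. The delicate step is to check that $z$ lies in $\stdSpec$ and not merely in $\jAlg$: since $\lambda(z) = w \in \lambda(\stdSpec)$ and $\stdSpec = \lambda^{-1}(\lambda(\stdSpec))$ by \eqref{eq:spec-prop}, we indeed obtain $z \in \stdSpec$. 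It is exactly this identity that legitimizes the whole reduction from $\jAlg$ to $\Re^r$. Consequently $\innprod{\lambda(c)}{w} + \varphi(w) = \innprod{c}{z} + \varphi(\lambda(z)) \leq \sup_{x \in \stdSpec}\{\innprod{c}{x} + \varphi(\lambda(x))\}$, and taking the supremum over $w$ finishes the reverse inequality.

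For the attainment claim, I would exploit the chain, valid for any $x^* \in \stdSpec$,
\[
\innprod{c}{x^*} + \varphi(\lambda(x^*)) \leq \innprod{\lambda(c)}{\lambda(x^*)} + \varphi(\lambda(x^*)) \leq \sup_{w \in \lambda(\stdSpec)}\{\innprod{\lambda(c)}{w}+\varphi(w)\}.
\]
If $x^*$ attains the left-hand supremum, then by the equality just proved the outer two quantities agree, forcing both inequalities to be equalities; the first equality is precisely $\innprod{c}{x^*} = \innprod{\lambda(c)}{\lambda(x^*)}$, while the second says that $\lambda(x^*)$ attains the right-hand supremum. Conversely, assuming these two conditions and reading the chain backwards shows that $x^*$ is optimal. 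The reformulation in terms of $U(c,w^*)$ is then immediate from the definition of that set.
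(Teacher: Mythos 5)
Your proof is correct and follows essentially the same route as the paper's: item~$(ii)$ for the inequality $\leq$, item~$(iii)$ together with the identity $\stdSpec = \lambda^{-1}(\lambda(\stdSpec))$ from \eqref{eq:spec-prop} for $\geq$, and the two-step inequality chain to characterize attainment. The only cosmetic difference is that you explicitly justify $w \in \lambda(\jAlg)$ before invoking item~$(iii)$, a detail the paper leaves implicit.
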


\begin{proof}
The assertion is essentially \cite[Corollary~3.3]{G19} which follows from more general results therein. 
Here, we give a direct proof for the sake of self-containment. 
Let $L$ and $R$ denote the left and the right hand sides of \eqref{eq:spec-sup-reduce}, respectively.

Let $x \in S$. By item $(ii)$ of Definition~\ref{def:FTvN}, $w \coloneqq \lambda(x)$ is a feasible solution to right-hand side of \eqref{eq:spec-sup-reduce} satisfying 
$\inProd{c}{x} \leq \inProd{\lambda(c)}{w}$. Therefore, $L \leq R$. 

Conversely, for any $w \in \lambda(\stdSpec)$,  the set $U(c,w)$ is nonempty, by item $(iii)$ of Definition~\ref{def:FTvN}. 
With that we pick \emph{any} $x \in U(c,w)$ and, by definition, we have $\lambda(x)=w$ and $\innprod{c}{x}=\innprod{\lambda(c)}{\lambda(x)} = \innprod{\lambda(c)}{w}$. 
These equalities imply that
\begin{equation*}%\label{eq:spec_sup_aux}
\innprod{c}{x} + \varphi(\lambda(x))
= \innprod{\lambda(c)}{w} + \varphi(w).
\end{equation*}
We also have $x \in S$, since 
$x \in \lambda^{-1}(w)\subseteq \lambda^{-1}(\lambda(\stdSpec))=\stdSpec$ (see \eqref{eq:spec-prop}).
In conclusion, for the two maximization problems associated to  $L$ and $R$, we can recover a feasible solution of the former from a feasible solution of the latter, without changing the objective value. This indicates $L \geq R$ which shows that \eqref{eq:spec-sup-reduce} holds.

The development done so far implies 
that if $x^* \in S$ is an optimal solution to the left-hand side of \eqref{eq:spec-sup-reduce}, then 
$w \coloneqq \lambda(x^*)$ must be an optimal solution to the right-hand side. In view of $L = R$, we have
$\innprod{c}{x^*} + \varphi(\lambda(x^*))
= \innprod{\lambda(c)}{\lambda(x^*)} + \varphi(\lambda(x^*))$, 
which leads to $\innprod{c}{x^*}=\innprod{\lambda(c)}{\lambda(x^*)}$. 
Conversely, suppose that $x^* \in S$ is such that $\innprod{c}{x^*}=\innprod{\lambda(c)}{\lambda(x^*)}$ and $\lambda(x^*)$ attains the supremum of the right-hand side of \eqref{eq:spec-sup-reduce}.
Again, in view of $L = R$, $x^*$ must be optimal to the left-hand side of \eqref{eq:spec-sup-reduce}.
\end{proof}

For instance, when $\varphi(\cdot)\equiv 0$, this assertion says that linear optimization $\sup_{x\in \stdSpec}\innprod{c}{x}$ for a spectral set $S$ can be reduced to $\sup_{w \in \lambda(\stdSpec)}\innprod{\lambda(c)}{w}$ whose solution $w^*$ can be used to recover a solution to $\sup_{x \in \stdSpec}\innprod{c}{x}$ through the computation of an element $x^*\in U(c,w^*)$.

Taking $\varphi(x)=-\norm{x}^2/2$, we obtain the following useful fact regarding projections, see also \cite[Corollary~3.8]{G19}. 

\begin{corollary}\label{cor:proj-spec}
Let $(\jAlg,\Re^r,\lambda)$ be a FTvN system and $\stdSpec$ be a spectral set in $\jAlg$. For any $z \in \jAlg$, we have
$$
\proj_{\stdSpec}(z) = \bigcup_{{ w^* \in \proj_{\lambda(\stdSpec)}}(\lambda(z))}  U(z,w^*).
$$
\end{corollary}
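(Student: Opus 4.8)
The plan is to recognize the projection problem as the special case of the maximization in Proposition~\ref{prop:G} obtained by taking $\varphi(w) = -\tfrac12\|w\|_2^2$, exactly as suggested by the paragraph preceding the statement. First I would rewrite projection as maximization: for any $z \in \jAlg$ and $x \in \stdSpec$, expanding the squared norm gives $\|x - z\|^2 = \|x\|^2 - 2\inProd{z}{x} + \|z\|^2$, so, since $\|z\|^2$ is constant, minimizing $\|x - z\|$ over $\stdSpec$ is equivalent to maximizing $\inProd{z}{x} - \tfrac12\|x\|^2$. Using item~$(i)$ of Definition~\ref{def:FTvN}, namely $\|\lambda(x)\|_2 = \|x\|$, this objective can be rewritten as $\inProd{z}{x} - \tfrac12\|\lambda(x)\|_2^2$, which is precisely the integrand on the left-hand side of \eqref{eq:spec-sup-reduce} with $c = z$ and $\varphi(w) = -\tfrac12\|w\|_2^2$.

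Next I would apply Proposition~\ref{prop:G} with these choices. The right-hand side of \eqref{eq:spec-sup-reduce} becomes $\sup_{w \in \lambda(\stdSpec)}\{\inProd{\lambda(z)}{w} - \tfrac12\|w\|_2^2\}$, and by the same completing-the-square identity, now carried out in $\Re^r$, maximizing this objective over $w \in \lambda(\stdSpec)$ is equivalent to minimizing $\|w - \lambda(z)\|_2$ over $\lambda(\stdSpec)$. Hence the set of maximizers on the right is exactly $\proj_{\lambda(\stdSpec)}(\lambda(z))$.

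Finally I would invoke the ``moreover'' part of Proposition~\ref{prop:G}: an element $x^* \in \stdSpec$ attains the supremum on the left—equivalently, by the first step, $x^* \in \proj_{\stdSpec}(z)$—if and only if $x^* \in U(z,w^*)$ for some maximizer $w^*$ of the right-hand side, that is, for some $w^* \in \proj_{\lambda(\stdSpec)}(\lambda(z))$. This is exactly the claimed identity. The set equality also holds in the degenerate case where the supremum is attained on neither side: then both projection sets are empty and the union is taken over an empty index set, so both sides reduce to $\emptyset$.

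The only real subtlety—and the step I would be most careful about—is the bookkeeping that translates ``attains the supremum'' into ``is a nearest point,'' together with ensuring that the two completing-the-square reductions (one in $\jAlg$, one in $\Re^r$) are applied consistently so that the maximizers on the right genuinely coincide with $\proj_{\lambda(\stdSpec)}(\lambda(z))$. There is no hard analytic obstacle: the entire content of the corollary is already packaged inside Proposition~\ref{prop:G}, and the task is simply to instantiate that result correctly for the quadratic objective.
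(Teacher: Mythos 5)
Your proposal is correct and follows essentially the same route as the paper's proof: complete the square to turn the projection into maximizing $\innprod{z}{x}-\tfrac12\norm{x}^2$, use item~$(i)$ of Definition~\ref{def:FTvN} to rewrite this as $\innprod{z}{x}-\tfrac12\norm{\lambda(x)}_2^2$, and then apply Proposition~\ref{prop:G} with $\varphi(w)=-\tfrac12\norm{w}_2^2$. Your extra remarks on identifying the right-hand-side maximizers with $\proj_{\lambda(\stdSpec)}(\lambda(z))$ and on the empty (non-attainment) case are fine details that the paper leaves implicit.
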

\begin{proof}
Since $\norm{x-z}^2/2 = \norm{x}^2/2 - \innprod{z}{x} + \norm{z}^2/2$ holds, $\proj_{\stdSpec}(z)$ is the set of solutions of the problem of maximizing
\[
\innprod{z}{x}-\frac{1}{2}\norm{x}^2 = \innprod{z}{x}-\frac{1}{2}\norm{\lambda(x)}_2^2,
\]
over $x \in \stdSpec$,
where the equality follows from item $(i)$ of Definition~\ref{def:FTvN}.
Therefore, the result follows by taking $\varphi(x)\coloneqq-\norm{x}^2/2$ in Proposition~\ref{prop:G}.
\end{proof}
We mention in passing that in the context of spectral sets over symmetric matrices there is a related earlier result by Lewis and Malick \cite[Theorem~A.1]{LM08}.
%%%%%
\subsection{Examples of FTvN systems}\label{ssec:ex}
We give some examples of FTvN systems. %, see also \cite[Section~4]{JJ23}
In the examples discussed here, we will always check that 
the eigenvalue mapping $\lambda$ is 
\emph{semialgebraic}, which will 
significantly simplify the analysis of our projected gradient method. We recall that a {semialgebraic set} in $\Re^n$ a finite union of sets of the form
$$\{x \in \Re^n ~|~ p_{i}(x)=0,\, i=1,\ldots,k \text{ and } q_j(x)<0,\,j=1,\ldots,\ell\},$$
where $p_i$ and $q_j$ are polynomials of real coefficients. A \emph{semialgebraic function} is a map $f:\Re^n \to \Re^m$ whose graph $\{(x,y)~|~y=f(x)\}$ is a semialgebraic set in $\Re^{n+m}$.
Basic facts on semialgebraic sets and functions can be seen in \cite{Boc98}, although we only make use of the most elementary properties.

We now start with what is perhaps the most basic example of FTvN system.
\begin{example}[The symmetric matrices]\label{ex:sym}
The vector space $\S^n$ of $n$ by $n$ real symmetric matrices can be seen as a FTvN system $(\S^n,\Re^n,\lambda)$ with
the eigenvalue map $\lambda: \S^n \to \orR{n}$ defined as follows: 
\[\lambda(X)\coloneqq (\lambda_1(X),\ldots,\lambda_n(X)),
\]
where $\lambda _{i}(X)$ is the $i$-th largest eigenvalue of $X$.
Here, we assume that $\S^n$ is equipped with the trace inner product, so that 
$\norm{X} = \norm{\lambda(X)}_2$ holds for all $X \in \S^n$. 
Item~$(ii)$ of Definition~\ref{def:FTvN} follows from a classical inequality (e.g., \cite[Theorem~2.2]{Le96}).
Finally, we take a look at item~$(iii)$ of Definition~\ref{def:FTvN}. For $C \in \S^n$ and $\mu \in \Re^r_{\downarrow}$, an element of $U(C,\mu)$ for $C \in \S^n$ and $\mu \in \lambda(\Re^n)=\orR{n}$ can be computed as follows. Letting $C=U^T\Diag(\lambda(C))U$ an eigenvalue decomposition of $C$ with an orthogonal matrix $U \in \Re^{n\times n}$, we obtain
\[
U^T\Diag(\mu)U \in U(C,\mu).
\]
An entirely analogous analysis can also be carried out for complex Hermitian matrices.
That $\lambda$ is a semialgebraic map is well-known, but it also follows from a more general result on Jordan algebras, see Proposition~\ref{prop:semialg}.
\end{example}

%%%
\subsubsection{Jordan algebras}\label{sec:jalg}

We discuss an important subclass of FTvN systems, the Euclidean Jordan algebras.  For more details see \cite{FK94,Faybusovich2008}.
A finite-dimensional real inner product space $(\jAlg,\innprod{\cdot}{\cdot})$ is called a \emph{Euclidean Jordan algebra} if it admits an operation $\circ:\jAlg\times \jAlg \to \jAlg$ (called a \emph{Jordan product}) which has an identity element $e \in \jAlg$ (i.e., $x\circ e = x$ for all $x\in \jAlg$) and satisfies
$$
x\circ y = y\circ x, \quad
x\circ (x^2\circ y) = x^2 \circ (x \circ y),\quad
\innprod{z\circ x}{y} = \innprod{z}{x\circ y},
$$
for all $x,y,z \in \jAlg$, where $x^2 = x\circ x$.

The \emph{cone of squares} associated to $\jAlg$ is given by $\stdCone = \{x \circ x \mid x \in \jAlg\}$ and it is known to be a \emph{symmetric cone}, i.e., a self-dual homogeneous cone. Conversely, every symmetric cone in finite dimensions is known to arise as the cone of squares of some Euclidean Jordan algebra.

An element $v \in \jAlg$ is called an \emph{idempotent} if $v\circ v = v$.
Two idempotents $u,v \in \jAlg$ are \emph{orthogonal} if $u\circ v=0$.
Note that $\innprod{u}{v}=\innprod{u^2}{v}=\innprod{u}{u\circ v}=0$ for orthogonal idempotents $u$ and $v$.

The maximal number of nonzero idempotents which are pairwise orthogonal is well-defined and called the \emph{rank} of $\jAlg$.
A remarkable fact about Euclidean Jordan algebras $\jAlg$ is that they admit a form of spectral decomposition very similar to the matrix case. Namely if $\jAlg$ has rank $r$, then any $x \in \jAlg$ can be expressed as
$$
x = \lambda_1(x) v_1 + \cdots + \lambda_r(x) v_r,
$$
for some $(\lambda_1(x),\ldots,\lambda_r(x)) \in \orR{r}$ and the $v_1,\ldots,v_r \in \jAlg$ are nonzero such that $v_i^2=v_i,~v_i \circ v_j=0~(i\ne j),~\sum_{i=1}^r v_i = e$.
Here, $\lambda_1(x),\ldots,\lambda_r(x)$ are called the \emph{eigenvalues} of $x$ and $\{v_1,\ldots,v_r\}$ is called a \emph{Jordan frame of $x$}. 
The eigenvalues of $x$ are unique.

The trace map $\tr(x):=\lambda_1(x)+\cdots+\lambda_r(x)$ yields an inner product $(x,y)\mapsto \tr(x \circ y)$ on $\jAlg$.
With that, we can define the map 
$\lambda: \jAlg \to \Re^r_{\downarrow}$ that maps an element to its eigenvalues ordered from largest to smallest. 

Equipped with the trace inner product on $\jAlg$, the tuple $(\jAlg,\Re^r,\lambda)$ forms a FTvN system, see \cite[Section~4]{G19}. 
For the sake of completeness, we check the details. 
First, item $(i)$ of Definition~\ref{def:FTvN}
follows by definition of the trace inner product.
Then, item $(ii)$ follows from \cite[Theorem 23]{baes07} which states that $\inProd{x}{y} \leq \innprod{\lambda(x)}{\lambda(y)}$. Moreover, given $c \in \jAlg$ and $\mu \in \lambda(\jAlg)=\Re^r_{\downarrow}$, considering the spectral decomposition $c = \lambda_1(c)v_1+\cdots+\lambda_r(c)v_r$,
we construct the following element of $U(c,\mu)$:
\begin{equation}\label{eq:ucmu}
\mu_1v_1+\cdots+\mu_r v_r \in U(c,\mu).
\end{equation}
We also observe that the rank of $x \in \jAlg$ as defined in \eqref{eq:rank} corresponds to the number of nonzero eigenvalues that $x$ has.

Finally, let us check that the map  $\lambda$ is semialgebraic, which may be obvious for those familiar with Jordan algebras. However, since this fact is not  clearly stated in the classical references, we provide a proof. 
\begin{proposition}\label{prop:semialg}
In a finite-dimensional Euclidean Jordan algebra $\jAlg$ of rank $r$, the map $\lambda: \jAlg \to \Re^r_{\downarrow}$ is semialgebraic.
\end{proposition}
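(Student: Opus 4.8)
The plan is to exhibit the graph of $\lambda$ as a set cut out by finitely many polynomial equalities and inequalities, and then invoke the definition of semialgebraicity directly. The key structural input is the characteristic polynomial of a Euclidean Jordan algebra. Recall from \cite[Chapter~II]{FK94} that there exist polynomial functions $a_1,\ldots,a_r$ on $\jAlg$ (homogeneous of degrees $1,\ldots,r$, respectively) such that the eigenvalues $\lambda_1(x),\ldots,\lambda_r(x)$ of any $x \in \jAlg$ are exactly the $r$ roots, counted with multiplicity, of the characteristic polynomial
\[
p_x(t) = t^r - a_1(x)\,t^{r-1} + a_2(x)\,t^{r-2} - \cdots + (-1)^r a_r(x).
\]
Fixing a linear isomorphism $\jAlg \cong \Re^N$ with $N = \dim\jAlg$, each coefficient $a_k$ becomes a genuine polynomial in the $N$ coordinates of $x$.

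Next I would translate the condition $w = \lambda(x)$ into polynomial relations between $x$ and $w = (w_1,\ldots,w_r)$. Writing $e_k$ for the $k$-th elementary symmetric polynomial in $r$ variables, we have the identity $\prod_{i=1}^r (t - w_i) = \sum_{k=0}^r (-1)^k e_k(w)\, t^{r-k}$. Hence the multiset $\{w_1,\ldots,w_r\}$ coincides with the multiset of roots of $p_x$ (with multiplicity) if and only if $e_k(w) = a_k(x)$ for every $k = 1,\ldots,r$. Since $\lambda(x)$ is precisely the decreasing arrangement of those roots, it follows that
\[
\graph(\lambda) = \left\{ (x,w) \in \jAlg \times \Re^r ~\middle|~ w_1 \geq \cdots \geq w_r \text{ and } e_k(w) = a_k(x),~k=1,\ldots,r \right\}.
\]

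Finally, I would observe that every defining condition on the right is polynomial: the ordering constraints $w_i - w_{i+1} \geq 0$ are polynomial inequalities, and each $e_k(w) - a_k(x) = 0$ is a polynomial equation in the coordinates of $(x,w) \in \Re^{N+r}$. Therefore $\graph(\lambda)$ is a semialgebraic subset of $\Re^{N+r}$, which is exactly what it means for $\lambda$ to be a semialgebraic map; the ordering constraints are moreover what single out the graph of an honest (single-valued) function rather than the full preimage under all permutations.

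I expect the only genuinely nontrivial ingredient to be the polynomial dependence of the coefficients $a_k$ on $x$; once this classical fact is secured, the remainder is a routine ``equate coefficients'' argument and the semialgebraic conclusion is immediate. If a self-contained derivation of the $a_k$ is desired rather than a citation, the cleanest route is to work first on the dense open set of \emph{regular} elements (those with $r$ distinct eigenvalues), where $e, x, \ldots, x^{r-1}$ are linearly independent and the relation expressing $x^r$ as their combination determines the $a_k$ as rational functions; one then checks that these expressions extend to polynomials on all of $\jAlg$. This extension step is the delicate part, and it is precisely the point that the classical references handle but do not phrase in the language of semialgebraicity.
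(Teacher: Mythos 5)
Your proof is correct and follows essentially the same route as the paper: both arguments rest on the classical fact that the coefficients of the Jordan-algebraic characteristic polynomial are polynomial in $x$, and both then describe $\graph(\lambda)$ by polynomial conditions together with the ordering inequalities. The only (cosmetic) difference is that you equate coefficients $e_k(w)=a_k(x)$ directly, whereas the paper phrases the same identity as a universally quantified equality of polynomials in $t$ and appeals to quantifier elimination, a step your formulation avoids.
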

\begin{proof}
 Consider the function $\det:\jAlg \to \Re $ such that $\det(x) = \prod_{i=1}^{r} \lambda_i(x)$ holds. It is
known that the composition of an elementary symmetric polynomial with $\lambda$ leads to a polynomial in $x$, see \cite[Theorem~III.1.2]{FK94}.
In particular, since  $\det$ is the composition of $\lambda$ with the elementary symmetric polynomial $\prod_{i=1}^r t_i$, we conclude that $\det$ is a polynomial function. 
Furthermore, since $\lambda_{i}(x-te) = \lambda_i(x) -t$ holds for all $x$ and $i = 1,\ldots, r$, 
the eigenvalues of a given $x \in \jAlg$ are exactly 
the roots of the polynomial $t \mapsto \det(x-te)$. 
With this, the graph of $\lambda$ 
can be written as 
the semialgebraic set
\[\{(x,u) \in \jAlg \times \Re^r_{\downarrow} \mid \forall t \in \Re,\,\, \det(x-te) = (t-u_1)\cdots (t-u_r) \},\] so $\lambda$ is indeed a semialgebraic function\footnote{Here we used the fact that the solution set of a first-order formula quantified over semialgebraic sets and functions must be semialgebraic as well, e.g., \cite[Proposition~2.2.4]{Boc98}.}.
\end{proof}
It is well-known that the $n \times n$ real symmetric matrices as in Example~\ref{ex:sym} form a Jordan algebra with the Jordan product being $X \circ Y = (XY+YX)/2$ and corresponding symmetric cone being the positive semidefinite matrices.
Next, we recall another well-known example.

\begin{example}[The algebra of second-order cones]\label{ex:soc}
		Consider the vector space $\jAlg \coloneqq \Re^{n}\times \Re$ with
		the Jordan product $\circ$ such 
		that for $(x,t),(y,u) \in \Re^{n}\times \Re$ we have
		\begin{equation}\label{eq:jor_soc}
		(x,t) \circ (y,u) \coloneqq \frac{\sqrt{2}}{2}(ux+ty,\inProd{x}{y}+tu),
		\end{equation}
		where $\inProd{\cdot}{\cdot}$ is the usual dot product. By convention we write $\Re^{n+1}$ instead of $\Re^n \times \Re$ and for $(x,t) \in \Re^{n+1}$, we assume that $x \in \Re^n$ and $t \in \Re$.		
		With that, $\Re^{n+1}$ can be seen as a Euclidean Jordan algebra, e.g., see \cite[Chapter~II, Section~1]{FK94}, \cite[Section~2]{FLT02} or \cite[page~141]{Sturm2000}. Depending on the reference, the $\frac{\sqrt{2}}{2}$ factor may be omitted in \eqref{eq:jor_soc}.
		In any case, the corresponding cone of squares is the second-order cone $\SOC{2}{n+1} \coloneqq \{(x,t) \in \Re^n \times \Re \mid \norm{x}_2 \leq t\}$, where $\norm{\cdot}_2$ is the usual Euclidean norm.	
			
	Given $(x,t) \in \Re^{n+1}$, we have the spectral decomposition
	\begin{equation}\label{eq:spect_dec}
	(x,t) = \lambda_+(x,t) e_+ + \lambda_-(x,t) e_-,\quad \lambda_\pm(x,t) = \frac{\sqrt{2}}{2}(t\pm \norm{x}_2),\quad e_\pm = \frac{\sqrt{2}}{2}(\pm w_x, 1),
	\end{equation}
	where $w_x = x/\norm{x}_2$ if $x \neq 0$, otherwise $w_x$ can be taken to be any vector of unit norm.

	The inner product in $\Re^{n+1}$ satisfies 
	$\tr((x,t) \circ (y,u)) = \inProd{x}{y}+tu$, which coincides with the usual Euclidean inner product on $\Re^{n+1}$ and explains the $\frac{\sqrt{2}}{2}$ factor in \eqref{eq:jor_soc}.
	With the eigenvalue map $\lambda(x,t):=(\lambda_+(x,t),\lambda_-(x,t))$, the tuple $(\Re^{n+1},\Re^2,\lambda)$ forms a FTvN system, 
where the inner products in  $\Re^{n+1},\Re^2$ are the usual Euclidean one.

	Following \eqref{eq:ucmu}, given $c=(u,t)$ and $\mu=(\mu_+,\mu_-)\in \orR{2}$, the spectral decomposition $c=\lambda_+(c)e_+ + \lambda_-(c)e_-$ of $c$ allows us to compute an element of $U(c,\mu)$ as follows.
	\begin{equation}\label{eq:ucmu_socp}
	\mu_+ e_+ + \mu_- e_- \in U(c,\mu).
	\end{equation}
\end{example}

%%%
\subsubsection{Singular values}
For positive integers $m,n$ and $r=\min(m,n)$, we denote by $\sigma(X)=(\sigma_1(X),\ldots,\sigma_r(X))$ the ordered vector of singular values of $X \in \Re^{m\times n}$ so that $\sigma_1(X) \geq \cdots \geq \sigma_r(X)$. 
We use the inner product \[\inProd{X}{Y} = \tr(X^TY),\] for matrices $X,Y \in \Re^{m \times n}$.
With that, we check that the tuple $(\Re^{m\times n}, \Re^r, \sigma)$ is a FTvN system.
Item~$(i)$ of Definition~\ref{def:FTvN} holds by definition, as for item~$(ii)$, 
it follows from \cite[Theorem~4.6]{lewis05a}.
Finally, to compute an element of $U(C,\mu)$ for $C \in \Re^{m\times n}$ and $\mu \in \sigma(\Re^{m\times n})=(\Re_+^r)_\downarrow$, let $C=U^T\Diag(\sigma(C))V$ be a singular value decomposition of $C$ with square orthogonal matrices $U\in \Re^{n\times n}$ and $V\in \Re^{m\times m}$. Then, we have
\[
U^T \Diag(\mu)V \in U(C,\mu).
\]
Finally, the map $\sigma$ is semialgebraic 
because $\sigma(X)$ correspond to the square-root of the eigenvalues of $XX^T$.

Complex analogues can also be constructed with the inner product $\operatorname{Re}(\tr(X^*Y))$ where $X^*$ denotes the adjoint of $X \in \mathbb{C}^{m \times n}$.

%%%
\subsubsection{Direct products}\label{sec:prod}
Other FTvN systems can be constructed by taking the direct product of the above examples.
Given $m$ FTvN systems $(\jAlg^{(i)}, \Re^{r_i}, \lambda^{(i)})$, $i=1,2,\ldots,m$, defining
\begin{equation}\label{eq:prod-FTvN}
\jAlg = \jAlg^{(1)} \times \cdots \times \jAlg^{(m)},\quad  r = r_1+\cdots+r_m,\quad \lambda = (\lambda^{(1)},\ldots,\lambda^{(m)}),
\end{equation}
the tuple $(\jAlg,\Re^r,\lambda)$ forms a FTvN system. Here, the inner product is given 
by \[\inProd{(x_1,\ldots,x_m)}{(y_1,\ldots,y_m)} = \sum _{i=1}^m \inProd{x_i}{y_i}_{i},\] where 
$\inProd{\cdot}{\cdot}_i$ indicates the inner product over each $\jAlg^{(i)}$.
Note that, given $c=(c_1,\ldots,c_m) \in \jAlg$ and $\mu=(\mu_1,\ldots,\mu_m) \in \lambda(\jAlg)$, we have
\begin{equation}\label{eq:ucmu_prod}
U(c,\mu) = U(c_1,\mu_1) \times \cdots \times U(c_m,\mu_m).
\end{equation}
In \eqref{eq:ucmu_prod} the ``$\supseteq$'' inclusion  follows from the definition of the $U(c_i,\mu_i)$. As for the inclusion ``$\subseteq$'', it follows from the fact that if $z \in U(c,\mu)$, 
then 
$\lambda^{(i)}(z_i) = \mu_i$ holds for every $i$ and
 $\sum _{i=1}^{m} ( \inProd{\lambda^{(i)}(c_i)}{\lambda^{(i)}(z_i)}-\inProd{c_i}{z_i} ) = 0$ holds.
However, each term of the summation is nonnegative by item~$(ii)$ of Definition~\ref{def:FTvN}, so 
$\inProd{\lambda^{(i)}(c_i)}{\lambda^{(i)}(z_i)}=\inProd{c_i}{z_i}$ holds for all $i$ and we have $z_i \in U(c_i,\mu_i)$ for every $i$.

In the particular case where each $\jAlg_i$ is a Euclidean Jordan algebra, there is 
another way to see $\jAlg$ as a FTvN system employing the  eigenvalue map
\begin{equation}\label{eq:lambda_ord}
x\mapsto \lambda^\downarrow(x) \coloneqq (\lambda^{(1)}(x),\ldots,\lambda^{(m)}(x))^\downarrow. 
\end{equation}
In this case, the tuple $(\jAlg,\Re^r,\lambda^\downarrow)$ also forms a FTvN system, since $\jAlg$ is also a Jordan algebra and, in fact, $\lambda^\downarrow$ correspond to the same eigenvalue map discussed in Section~\ref{sec:jalg}.

Still, for $c=(c_1,\ldots,c_m) \in \jAlg$ and $\mu=(\mu_1,\ldots,\mu_m) \in \lambda^\downarrow(\jAlg)$, it makes sense to discuss how to compute an element in $U(c,\mu)$ using the  
$U(c_i,\mu_i)$, since in applications we would typically consider spectral decompositions according to the block structure of $\jAlg$.
In order to obtain an element in $U(c,\mu)$, we sort   the $r=r_1+\cdots+r_m$ real values in $\{\lambda^{(i)}_j(c_i)~|~i=1,\ldots,m;~j=1,\ldots,r_i\}$ (which are the eigenvalues of $c$) in descending order so that $\lambda^{(i_k)}_{j_k}(c_{i_k})$ denotes $k$-th largest eigenvalue of $c$.
Then, we have
\begin{equation}\label{eq:ucmu_ord}
\sum_{k=1}^{r}\mu_{i_k}\tilde{v}_{j_k}^{(i_k)} \in U(c,\mu),
\end{equation}
where $\tilde{v}_j^{(i)}$ is defined as
\[
\tilde{v}_j^{(i)}\coloneqq (0,\ldots,0,\underbrace{v_j^{(i)}}_{i\text{-th position}},0,\ldots,0),
\]
and  $\{{v}^{(i)}_1,\ldots, {v^{(i)}_{r_i}}\}$ is a Jordan frame for $c^{(i)}$.

The difference between 
$(\jAlg,\Re^r,\lambda^\downarrow)$ and $(\jAlg,\Re^r,\lambda)$ boils down to whether it is desirable to order all eigenvalues or to only order eigenvalues inside each block $\jAlg_i$. 
We will use both $(\jAlg,\Re^r,\lambda^\downarrow)$ and  $(\jAlg,\Re^r,\lambda)$ in our examples in Section~\ref{sec:ex}.

%%%
\section{Projected gradient and related topics}\label{sec:projg}

For a spectral set $\lambda^{-1}(\mathcal{C})$ with $\mathcal{C} \subseteq \Re^r$, it follows from Corollary~\ref{cor:proj-spec} that in order to project $x$ onto $\lambda^{-1}(\mathcal{C})$ it is enough to do as follows. 
First, we let $w$ be such that
\[
w \in \arg \min\, \{\norm{\lambda(x)-v}_2 \mid v \in \lambda(\jAlg), v \in \mathcal{C} \}.
\]
Then, we compute an element of $U(x,w)$ in order to obtain 
the desired projection. In particular, we have 
\[
\proj_{\lambda^{-1}(\mathcal{C})}(x) \supseteq U(x,w).
\]
Being able to compute the projection onto $\lambda^{-1}(\mathcal{C})$, allows us (in theory)
to use a simple  projected gradient algorithm for solving \eqref{eq:eig_prog}  by considering iterations that satisfy
\begin{equation}\label{eq:proj-grad-eigopt}
x_{k+1} \in \proj_{\lambda^{-1}(\mathcal{C})}(x_k-\alpha_k \nabla f(x_k)),
\end{equation}
where $\alpha_k>0$ is a step-size, see  Algorithm~\ref{alg:project}.

\begin{algorithm}[H] \label{alg:project}
\caption{Projected gradient method for \eqref{eq:eig_prog}}
	\KwIn{
		$f:\jAlg\to\Re$, $\mathcal{C}\subset\Re^r$, $x_0\in\jAlg$
	}
	$k \gets 0$
	
	\While{stopping criteria not satisfied}{
		Select a step-size $\alpha_k>0$.
		
		Set $y_k = x_k-\alpha_k\nabla f(x_k)$.
		
		Compute $w_k \in \arg \min_v \{\norm{\lambda(y_k)-v}_2 \mid v \in \lambda(\jAlg), v \in \mathcal{C} \}$.
		
		Compute $x_{k+1} \in U(y_k, w_k)$.
		
		%$x_{k+1} \gets P_{\lambda^{-1}(\mathcal{C})}(x_k-\alpha_k \nabla f(x_k))$	
		
		$k \gets k+1$
	}
\end{algorithm}

First we analyze the convergence properties of Algorithm~\ref{alg:project} under the assumption that the problem data is semialgebraic, which is enough to cover a wide range of applications.

\begin{theorem}[\cite{ABS13}]\label{th:proj-grad-semialg}
Let $f:\Re^n\to \Re$ be a differentiable semialgebraic function with $L$-Lipschitz continuous gradient and $S\subset \Re^n$ be a closed semialgebraic set. Let $\{x_k\}$ be generated by
\[
x_{k+1} \in \proj_{S}(x_k-\alpha_k\nabla f(x_k))
\]
with step-size $\alpha_k \in (\ep,1/L-\ep)$ for some $\ep\in(0,1/(2L))$. 
If $\{x_k\}$ is bounded, then it converges to some $x^*$ such that $0 \in \nabla f(x^*)+N_{S}(x^*)$. Moreover, $\sum_k \norm{x_{k+1}-x_k}^2 < +\infty$.
\end{theorem}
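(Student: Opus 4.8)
The plan is to treat this as an instance of the abstract convergence theory for descent methods on Kurdyka--\L ojasiewicz (KL) functions developed in \cite{ABS13}, so the real task is to cast the projected gradient recursion in their framework and check that the required hypotheses hold with constants that are uniform in $k$. To this end I would first reformulate the constrained problem as the unconstrained minimization of the extended-real-valued function $F \coloneqq f + \delta_S$. Since $S$ is closed and semialgebraic, $\delta_S$ is lower semicontinuous and semialgebraic, and since $f$ is semialgebraic, so is $F$; consequently $F$ satisfies the KL inequality at every point (the nonsmooth \L ojasiewicz gradient inequality). A key observation is that the iteration is exactly a forward--backward (proximal gradient) step:
\[
x_{k+1} \in \proj_S(x_k - \alpha_k \nabla f(x_k)) = \arg\min_{x}\Bigl\{ \inProd{\nabla f(x_k)}{x - x_k} + \tfrac{1}{2\alpha_k}\norm{x-x_k}^2 + \delta_S(x) \Bigr\}.
\]

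Next I would verify the three standard ingredients of the abstract scheme. For \emph{sufficient decrease}, the descent lemma (from $L$-Lipschitzness of $\nabla f$) combined with the fact that $x_{k+1}$ minimizes the model above yields $F(x_{k+1}) + a\norm{x_{k+1}-x_k}^2 \le F(x_k)$ with $a = \tfrac{1}{2\alpha_k} - \tfrac{L}{2}$; the upper bound $\alpha_k \le 1/L - \ep$ forces $a$ to be bounded below by a positive constant, so the decrease is genuinely uniform. For the \emph{relative error} condition, the first-order optimality of the prox step gives $-\tfrac{1}{\alpha_k}(x_{k+1}-x_k) - \nabla f(x_k) \in N_S(x_{k+1})$, hence
\[
d_{k+1} \coloneqq \nabla f(x_{k+1}) - \nabla f(x_k) - \tfrac{1}{\alpha_k}(x_{k+1}-x_k) \in \nabla f(x_{k+1}) + N_S(x_{k+1}) = \partial F(x_{k+1}),
\]
and $\norm{d_{k+1}} \le (L + 1/\alpha_k)\norm{x_{k+1}-x_k}$, where the lower bound $\alpha_k > \ep$ makes the coefficient uniform. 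Finally, the \emph{continuity} condition follows because $f$ is continuous and $S$ is closed, so $F$ is continuous relative to $S$ along any convergent subsequence.

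Having checked these, two conclusions follow. Telescoping the sufficient-decrease inequality, and using that a bounded $\{x_k\}$ keeps $F(x_k) = f(x_k)$ bounded below, already gives $\sum_k \norm{x_{k+1}-x_k}^2 < +\infty$ independently of the KL property. The convergence of the whole bounded sequence to a single limit $x^*$, together with the stationarity $0 \in \partial F(x^*) = \nabla f(x^*) + N_S(x^*)$, is then delivered by the abstract KL convergence theorem of \cite{ABS13}, invoking the sum rule $\partial(f + \delta_S) = \nabla f + N_S$ which is valid for smooth $f$. The most delicate points, and where I expect the real care to be needed, are ensuring that the subgradient and normal-cone identities hold in the limiting (nonsmooth) sense and that the limit point indeed lies in $S$; once these are in place, the heavy lifting is carried entirely by the KL machinery, so matching the iteration to the abstract hypotheses with uniform constants is the crux of the argument.
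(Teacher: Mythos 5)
Your proposal is correct and takes essentially the same route as the paper: the paper's proof is a one-line citation of \cite{ABS13} (Theorem~5.3, the forward--backward/proximal-gradient specialization of the KL descent framework), and your argument simply unpacks the verification of that theorem's hypotheses --- sufficient decrease from the descent lemma with $\alpha_k \le 1/L-\ep$, the relative-error bound from the prox optimality condition with $\alpha_k>\ep$, the continuity condition, and the KL property of $f+\delta_S$ from semialgebraicity. The details you supply are all sound, so there is nothing to flag beyond noting that the paper delegates this entire verification to the cited reference.
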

\begin{proof}
This is a special case of Theorem~5.3 in \cite{ABS13} for semialgebraic functions. See also the observation following Theorem~5.3.
\end{proof}

\begin{theorem}\label{th:proj-grad}
Let $\mathcal{C}$ be a closed subset of $\Re^r$ and
$f:\jAlg \to \Re$ be a $C^1$ function with $L$-Lipschitz continuous gradient.
Suppose that $f$, $\lambda$ and $\mathcal{C}$ are semialgebraic. Let $\{x_k\}$ be generated by Algorithm~\ref{alg:project} with step-size $\alpha_k \in (\ep,1/L-\ep)$ for some $\ep\in(0,1/(2L))$. If $\{x_k\}$ is bounded then it converges to a critical point $x^*$ of $f + \delta _{\lambda^{-1}(\mathcal{C})}$, i.e., $0 \in \nabla f(x^*)+N_{\lambda^{-1}(\mathcal{C})}(x^*)$. Moreover, $\sum_k\norm{x_{k+1}-x_k}^2<+\infty$ holds.
\end{theorem}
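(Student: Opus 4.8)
The plan is to reduce the statement to the abstract convergence result of Theorem~\ref{th:proj-grad-semialg}, applied to the closed semialgebraic set $S := \lambda^{-1}(\mathcal{C})$. Concretely, I would verify two things: (a) that $S$ is closed and semialgebraic, and (b) that the iterates produced by Algorithm~\ref{alg:project} are genuine projected gradient iterates for $f$ over $S$, i.e. $x_{k+1} \in \proj_S(x_k - \alpha_k \nabla f(x_k))$. Once these are in place, the two conclusions --- convergence of $\{x_k\}$ to a point $x^*$ with $0 \in \nabla f(x^*) + N_S(x^*)$ and the summability $\sum_k \norm{x_{k+1}-x_k}^2 < +\infty$ --- follow verbatim, since $N_S(x^*) = N_{\lambda^{-1}(\mathcal{C})}(x^*)$ is precisely the limiting normal cone in the definition of a critical point of $f + \delta_{\lambda^{-1}(\mathcal{C})}$.

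For part (a), semialgebraicity of $S$ is immediate: $\lambda$ is semialgebraic by hypothesis and $\mathcal{C}$ is semialgebraic, so $\lambda^{-1}(\mathcal{C}) = \{x \in \jAlg \mid \lambda(x) \in \mathcal{C}\}$ is the solution set of a first-order formula over semialgebraic data, hence semialgebraic (by the same quantifier-elimination fact cited in the proof of Proposition~\ref{prop:semialg}). For closedness, I would first record that $\lambda$ is automatically $1$-Lipschitz in \emph{any} FTvN system: combining items~$(i)$ and $(ii)$ of Definition~\ref{def:FTvN} yields
\begin{equation*}
\norm{\lambda(x)-\lambda(y)}_2^2 = \norm{x}^2 - 2\innprod{\lambda(x)}{\lambda(y)} + \norm{y}^2 \leq \norm{x}^2 - 2\innprod{x}{y} + \norm{y}^2 = \norm{x-y}^2
\end{equation*}
for all $x,y \in \jAlg$. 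In particular $\lambda$ is continuous, so that the preimage $S = \lambda^{-1}(\mathcal{C})$ of the closed set $\mathcal{C}$ is closed.

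For part (b), the key tool is Corollary~\ref{cor:proj-spec}. Writing $y_k = x_k - \alpha_k \nabla f(x_k)$, I observe that the feasible set of the minimization in the algorithm is $\{v \in \lambda(\jAlg) \mid v \in \mathcal{C}\} = \lambda(\jAlg) \cap \mathcal{C}$, which equals $\lambda(S)$ by \eqref{eq:spec-prop}; hence the computed $w_k$ satisfies $w_k \in \proj_{\lambda(S)}(\lambda(y_k))$. Corollary~\ref{cor:proj-spec} then gives $U(y_k,w_k) \subseteq \proj_S(y_k)$, and since $x_{k+1} \in U(y_k,w_k)$ we obtain $x_{k+1} \in \proj_S(x_k - \alpha_k \nabla f(x_k))$, exactly matching the iteration in Theorem~\ref{th:proj-grad-semialg}. (Closedness and nonemptiness of $S$ guarantee $\proj_S(y_k)\neq\emptyset$; reading the corollary in reverse, this forces the argmin defining $w_k$ to be attained and $U(y_k,w_k)\neq\emptyset$ by item~$(iii)$ of Definition~\ref{def:FTvN}, so every iterate is well defined.)

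I expect no genuine obstacle: the entire substance of the convergence analysis is delegated to Theorem~\ref{th:proj-grad-semialg}, and what remains is the reduction. The one point that is easy to overlook --- and the only place where the hypotheses could fail to transfer --- is the continuity of $\lambda$, which is never postulated as an axiom of an FTvN system but is exactly what is needed to pass closedness of $\mathcal{C}$ to closedness of $S$. The short Lipschitz estimate above settles this cleanly, after which the proof is a matter of matching the algorithm's two-step projection to $\proj_S$ via Corollary~\ref{cor:proj-spec}.
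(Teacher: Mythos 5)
Your proof is correct and follows essentially the same route as the paper: reduce to Theorem~\ref{th:proj-grad-semialg} by checking that $\lambda^{-1}(\mathcal{C})$ is closed and semialgebraic and that the algorithm's two-step update realizes $\proj_{\lambda^{-1}(\mathcal{C})}$ via Corollary~\ref{cor:proj-spec}. You in fact supply more detail than the paper does --- in particular the $1$-Lipschitz estimate for $\lambda$ giving closedness of the preimage, a point the paper's proof leaves implicit.
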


\begin{proof}
Algorithm~\ref{alg:project} is the projected gradient method \eqref{eq:proj-grad-eigopt} for the constraint set $\lambda^{-1}(\mathcal{C})$.
Since $\mathcal{C}$ is semialgebraic, the spectral set
$\lambda^{-1}(\mathcal{C})$ is also semialgebraic, see \cite[Proposition~2.2.7]{Boc98}.
Hence, the assertion follows by Theorem~\ref{th:proj-grad-semialg}.
We also remark that since $f$ is $C^1$, 
we have $\partial(f + \delta _{\lambda^{-1}(\mathcal{C})} )(x^*) = \nabla f(x^*) +N_{\lambda^{-1}(\mathcal{C})}(x^*)$, by \cite[Exercise 8.8]{RW}.
\end{proof}

Next, we briefly comment on  local convergence rates for Algorithm~\ref{alg:project}, which, in general, requires  further assumptions. 
In what follows, let $F: \jAlg \to \Re\cup \{+\infty\}$ be a proper closed function.
%We denote by $\dom \partial F$, the subset of $\jAlg$ for which the limiting subdifferential $\partial F(x)$ is non-empty.
Then, $F$ is said to be a \emph{Kurdyka-{\L}ojasiewicz (KL) function}  with exponent $\alpha$ (e.g., see \cite[Definition~2.2]{LP18}) at $\bar x \in \dom\partial F$ if there exists $c, \epsilon > 0$ and $\nu \in (0,\infty]$ so that 
\begin{equation}\label{eq:kl}
\dist(0, \partial F(x)) \geq c(F(x) - F(\bar x))^\alpha
\end{equation}
holds whenever $\norm{x-\bar{x}} \leq \epsilon$ and 
$F(\bar x) < F(x) < F(\bar{x}) + \nu$.
We also call \eqref{eq:kl} the \emph{KL property of $F$ at $\bar{x}$}.

Next, let $F$ be defined by $F(x) \coloneqq f(x) + \delta _{\lambda^{-1}(\mathcal{C})}(x)$, where we recall that $\delta _{\lambda^{-1}(\mathcal{C})}$ denotes the indicator function of $\lambda^{-1}(\mathcal{C})$.
If $F$ is a {KL function} with exponent $\alpha$, then a local convergence rate for Algorithm~\ref{alg:project} can be obtained through \cite[Theorem~3.4]{FGP15}. In particular, 
if $\alpha = 1/2$, the method has local linear convergence, see also the discussion in \cite[Section~5.3]{LP18}. We note this as a proposition.
\begin{proposition}\label{prop:local_linear}
Let $F \coloneqq f(\cdot) + \delta _{\lambda^{-1}(\mathcal{C})}(\cdot)$ 
and let $\{x_k\}$ be generated by Algorithm~\ref{alg:project} with step-size $\alpha_k \in (\ep,1/L-\ep)$ for some $\ep\in(0,1/(2L))$.
If $\{x_k\}$ converges to $\bar{x}$ and $F$ has the KL property with exponent $1/2$ at $\bar{x}$,  then $\{x_k\}$ converges linearly to $x^*$.
\end{proposition}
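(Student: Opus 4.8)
The plan is to interpret Algorithm~\ref{alg:project} as the proximal gradient method applied to $F = f + \delta_{\lambda^{-1}(\mathcal{C})}$ and then invoke the abstract convergence-rate result \cite[Theorem~3.4]{FGP15}. As already observed in the proof of Theorem~\ref{th:proj-grad}, the iterates satisfy $x_{k+1} \in \proj_{\lambda^{-1}(\mathcal{C})}(x_k - \alpha_k \nabla f(x_k))$; since projecting onto a closed set is precisely the proximal map of its indicator function, each update is a proximal gradient step for $F$. Because $\mathcal{C}$ is closed and $\lambda$ is continuous, $\lambda^{-1}(\mathcal{C})$ is closed, so $F$ is proper and closed, and $f$ is $C^1$ with $L$-Lipschitz gradient, which places us squarely in the setting of \cite{FGP15}.

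The core of the argument is to verify the two standing inequalities behind \cite[Theorem~3.4]{FGP15}: a sufficient-decrease estimate and a relative-error (subgradient) bound. For sufficient decrease, the descent lemma combined with the optimality of the projection step (which yields $\innprod{\nabla f(x_k)}{x_{k+1}-x_k} \le -\tfrac{1}{2\alpha_k}\norm{x_{k+1}-x_k}^2$ upon comparing $x_{k+1}$ with the feasible point $x_k$) gives
\[
F(x_{k+1}) + \Big(\tfrac{1}{2\alpha_k}-\tfrac{L}{2}\Big)\norm{x_{k+1}-x_k}^2 \le F(x_k),
\]
and the bound $\alpha_k < 1/L - \ep$ keeps the coefficient bounded below by a positive constant. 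For the relative-error bound, the first-order optimality condition of the projection produces the explicit subgradient
\[
\nabla f(x_{k+1}) - \nabla f(x_k) + \tfrac{1}{\alpha_k}(x_k - x_{k+1}) \in \nabla f(x_{k+1}) + N_{\lambda^{-1}(\mathcal{C})}(x_{k+1}) = \partial F(x_{k+1}),
\]
whose norm is at most $(L + 1/\ep)\norm{x_{k+1}-x_k}$ by Lipschitzness of $\nabla f$ and $\alpha_k > \ep$; hence $\dist(0,\partial F(x_{k+1})) \le (L+1/\ep)\norm{x_{k+1}-x_k}$.

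With these two inequalities, together with the continuity condition $F(x_k) \to F(\bar x)$ (which holds since $x_k \to \bar x$, $\bar x \in \lambda^{-1}(\mathcal{C})$ by closedness, and $f$ is continuous), all hypotheses of \cite[Theorem~3.4]{FGP15} are in force. The assumed KL property \eqref{eq:kl} of $F$ at $\bar x$ with exponent $1/2$ then places us in the geometric convergence-rate regime of that theorem, giving linear convergence of $\{x_k\}$ to $\bar x$; see also the discussion in \cite[Section~5.3]{LP18}.

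I expect the main obstacle to be the careful bookkeeping of the relative-error bound, since $\proj_{\lambda^{-1}(\mathcal{C})}$ is set-valued over a nonconvex spectral set and one must confirm that the displayed element genuinely lies in the limiting subdifferential $\partial F(x_{k+1})$ and is controlled by $\norm{x_{k+1}-x_k}$. This is routine for proximal gradient schemes, but it warrants explicit verification here because the proximal step is a projection onto a nonconvex set rather than a convex one.
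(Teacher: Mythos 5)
Your proposal is correct and follows essentially the same route as the paper: both reduce the claim to case (ii) of \cite[Theorem~3.4]{FGP15} after checking its hypotheses, including the $F$-convergence condition $x_k \to \bar{x}$, $F(x_k)\to F(\bar x)$. The only difference is that you verify the sufficient-decrease and relative-error conditions ($H_1$, $H_2$ in \cite{FGP15}) by direct computation, whereas the paper simply observes that Algorithm~\ref{alg:project} is a special case of the scheme in \cite[Section~4, Eq.~(12)]{FGP15}, for which those conditions are already established there; your explicit derivation is a valid (and slightly more self-contained) way of making the same point.
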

\begin{proof}
As remarked previously, this result follows from \cite[Theorem~3.4]{FGP15}, but here we discuss some of the details.
In reality, in order to invoke \cite[Theorem~3.4]{FGP15}, it is necessary to check that $x_k$ satisfy certain conditions which were labeled $H_1, H_2$ and $H_3$ therein. This is indeed the case, because $x_k$ is a particular case of the iteration discussed at the beginning of \cite[Section~4]{FGP15}, see Equation~(12) therein and the subsequent comments. 
The other assumption that needs to be checked is that $\{x_k\}$ \emph{$F$-converges} to $\bar{x}$, which means that $x_k \to \bar{x}$ and $F(x_k) \to F(\bar{x})$. 
The former holds by assumption and the latter holds because of the continuity of $f$  and the fact that $x_k \in \lambda^{-1}(\mathcal{C})$ for all $k$. 
This takes care of all the assumptions required to invoke case (ii) of \cite[Theorem~3.4]{FGP15} which leads to linear convergence, or in the parlance of \cite{FGP15}, \emph{exponential convergence}.
\end{proof}

Let us briefly examine the KL condition for $F$.
Since $f$ is smooth, we have $\partial F (x) = \nabla f(x) + \partial \delta _{\lambda^{-1}(\mathcal{C})}(x)$, see \cite[Exercise 8.8]{RW}. We recall, 
however, that $\partial \delta _{\lambda^{-1}(C)}(x)$ is the 
normal cone of $\lambda^{-1}(\mathcal{C})$ at $x$. So, for $x \in \lambda^{-1}(\mathcal{C})$, \eqref{eq:kl} can be written as 
\begin{equation}\label{eq:kl2}
\dist(\nabla f(x), -N_{\lambda^{-1}(\mathcal{C})}(x)) \geq c(f(x) - f(\bar x))^\alpha
\end{equation}
In view of \eqref{eq:kl2}, it is important to analyze 
the normal cone $N_{\lambda^{-1}(\mathcal{C})}(x)$. In the particular 
case where $\jAlg$ is a Jordan algebra and $\mathcal{C} \subseteq \Re^r$ is a permutation invariant set\footnote{That is, $P(\mathcal{C}) = \mathcal{C}$ holds for any $r\times r$ permutation matrix $P$.}, the computation of 
the normal cone $N_{\lambda^{-1}(\mathcal{C})}(x)$ can be related to the normal cone of $\mathcal{C}$ at $\lambda(x)$ as follows.

\begin{proposition}
Let $\jAlg$ be a Euclidean Jordan algebra of rank $r$ and let $\mathcal{C} \subseteq \Re^r$ be a permutation invariant closed set. Then 
\[
N_{\lambda^{-1}(\mathcal{C})}(x) = \{s \in \jAlg \mid \exists \mathcal{J} \in \mathcal{J}({x,s}) \text{ with } \diag (s,\mathcal{J}) \in N_\mathcal{C}(\lambda(x)) \},
\]
where $\mathcal{J}(x,s)$ is the set of Jordan frames $\{e_1, \ldots, e_r\}$ for which $x = \lambda_1(x)e_1 + \cdots \lambda_r(x) e_r$ and $s = a_1 e_1 + \cdots a_r e_r$ hold\footnote{Because the eigenvalues are unique, the $a_i$'s must be the eigenvalues of $s$. Overall, $\{e_1, \ldots, e_r\}$ is a Jordan frame that ``diagonalizes'' both $x$ and $s$, but only the eigenvalues of $x$ are required to be in nonincreasing order.} and $\diag(s,\mathcal{J}) $ is the vector $(a_1,\ldots, a_r)$.
\end{proposition}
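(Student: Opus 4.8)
The plan is to characterize membership in $N_{\lambda^{-1}(\mathcal{C})}(x)$ through \emph{proximal} normals and the projection formula of Corollary~\ref{cor:proj-spec}, and then to pass to the limiting normal cone by an outer-limit argument. Recall that the proximal normal cone $N^{\mathrm p}_{S}(x)$ of a closed set $S$ at $x\in S$ consists of those $s$ with $x\in\proj_S(x+ts)$ for some $t>0$, and that for closed sets the limiting normal cone is recovered as the set of limits $s=\lim_k s^k$ with $s^k\in N^{\mathrm p}_{S}(x^k)$ and $x^k\in S$, $x^k\to x$ (see \cite{RW}). I would therefore first prove the desired identity at the proximal level: $s\in N^{\mathrm p}_{S}(x)$ for $S\coloneqq\lambda^{-1}(\mathcal{C})$ if and only if there is a frame $\{e_1,\dots,e_r\}\in\mathcal{J}(x,s)$ with $\diag(s,\mathcal{J})\in N^{\mathrm p}_{\mathcal{C}}(\lambda(x))$.

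Starting from $x\in\proj_S(x+ts)$, Corollary~\ref{cor:proj-spec} gives $\lambda(x)\in\proj_{\lambda(S)}(\lambda(x+ts))$ together with $x\in U(x+ts,\lambda(x))$. The latter unwinds to equality $\inProd{x+ts}{x}=\inProd{\lambda(x+ts)}{\lambda(x)}$ in the inequality of item~$(ii)$ of Definition~\ref{def:FTvN}; in a Euclidean Jordan algebra this is equivalent to $x$ and $x+ts$ being simultaneously diagonalized by a common Jordan frame, hence (as $t\neq 0$) to $x$ and $s$ sharing a frame $\{e_i\}$ with $x=\sum_i\lambda_i(x)e_i$ and $s=\sum_i a_i e_i$. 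I would invoke the known characterization of equality in this inequality via such simultaneous diagonalization (e.g. \cite{baes07}). Writing $v\coloneqq\lambda(x)$ and $a\coloneqq(a_1,\dots,a_r)=\diag(s,\mathcal{J})$, we get $x+ts=\sum_i(v_i+ta_i)e_i$, so $\lambda(x+ts)=(v+ta)^\downarrow$, and the first condition reads $v\in\proj_{\lambda(S)}((v+ta)^\downarrow)$, where $\lambda(S)=\orR{r}\cap\mathcal{C}$ by \eqref{eq:spec-prop}.

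The next step is the permutation-invariance reduction turning $\proj_{\orR{r}\cap\mathcal{C}}$ into $\proj_{\mathcal{C}}$. Here I would use two facts: (a) proximal normality is inherited by smaller scalings, so $x\in\proj_S(x+t's)$ for every $t'\in(0,t]$, letting me take $t$ as small as I wish; and (b) for closed permutation-invariant $\mathcal{C}$ and an ordered vector $w$ one has $\proj_{\orR{r}\cap\mathcal{C}}(w)=\proj_{\mathcal{C}}(w)\cap\orR{r}$, which follows from the rearrangement inequality $\inProd{w}{z}\le\inProd{w}{z^\downarrow}$ (so that $z^\downarrow\in\mathcal{C}$ is at least as close to $w$ as $z$). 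Reordering the frame within each block of equal $x$-eigenvalue so that the associated $a_i$ are nonincreasing, and shrinking $t$, makes $v+ta$ itself nonincreasing, whence $(v+ta)^\downarrow=v+ta$; combining this with (b) and the fact that $v$ is ordered yields $v\in\proj_{\mathcal{C}}(v+ta)$, i.e. $a\in N^{\mathrm p}_{\mathcal{C}}(v)$. Each implication is reversible, so this establishes the proximal-level equivalence.

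Finally, I would upgrade to the limiting normal cones on both sides. For $s\in N_S(x)$, pick $s^k\in N^{\mathrm p}_{S}(x^k)$ with $s^k\to s$ and $x^k\to x$, $x^k\in S$; each yields a frame $\mathcal{J}^k$ and $a^k\in N^{\mathrm p}_{\mathcal{C}}(\lambda(x^k))$. Using compactness of the set of Jordan frames of $\jAlg$ and continuity of $\lambda$, I would extract a subsequence with $\mathcal{J}^k\to\mathcal{J}$ and $a^k\to a$, check that $\{e_i\}\in\mathcal{J}(x,s)$ (the ordering $\lambda_1(x)\ge\cdots\ge\lambda_r(x)$ survives the limit), and conclude $a\in N_{\mathcal{C}}(\lambda(x))$. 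The converse uses permutation invariance of $\mathcal{C}$ to build feasible iterates $x^k=\sum_i v_i^k e_i\in S$ from $v^k\to v=\lambda(x)$, $v^k\in\mathcal{C}$, and applies the reverse proximal implication at each $x^k$. The main obstacle is precisely this limiting passage under eigenvalue multiplicities: when $\lambda(x)$ has repeated entries the frame is highly non-unique, so I must argue that convergent subsequences of frames can be selected compatibly with the block reordering used in the proximal step, and that membership in $N_{\mathcal{C}}$ is preserved; controlling this interplay between the non-uniqueness of frames and the permutation bookkeeping is where the real care will be needed.
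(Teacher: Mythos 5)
Your argument is correct in outline, but it takes a genuinely different route from the paper: the paper's entire proof is a one-line appeal to a known subdifferential chain rule for spectral functions on Euclidean Jordan algebras, namely \cite[Theorem~27]{LT20} applied to $f=\delta_{\mathcal{C}}$, using only that $\delta_{\mathcal{C}}\circ\lambda=\delta_{\lambda^{-1}(\mathcal{C})}$ and $N_{\mathcal{C}}=\partial\delta_{\mathcal{C}}$. You instead rebuild the formula from first principles: a proximal-normal characterization obtained from the FTvN projection formula (Corollary~\ref{cor:proj-spec}), the equality case of item~$(ii)$ of Definition~\ref{def:FTvN} in a Jordan algebra (which, as you note via \cite{baes07}, must be the \emph{similarly ordered} simultaneous-diagonalization version, not mere operator commutativity), a rearrangement argument reducing $\proj_{\orR{r}\cap\mathcal{C}}$ to $\proj_{\mathcal{C}}$, and finally the outer-limit representation of $N_S$ via proximal normals from \cite{RW}. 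What your approach buys is self-containment and transparency about where permutation invariance and the Jordan structure actually enter; what it costs is exactly the bookkeeping you flag at the end. That flagged step does go through: the set of Jordan frames is compact (each primitive idempotent has unit trace norm, and a trace-one idempotent limit is again primitive), the $a^k$ are bounded permutations of $\lambda(s^k)$ so one may pass to a subsequence with a fixed permutation, and in the converse direction one should build $x^k=\sum_i v^k_i e_i$ and $s^k=\sum_i a^k_i e_i$ directly from the \emph{unordered} approximants $v^k\to\lambda(x)$, $a^k\in N^{\mathrm{p}}_{\mathcal{C}}(v^k)$ on the fixed frame $\mathcal{J}$ (for small $t$ a single permutation simultaneously orders $v^k$ and $v^k+ta^k$, and since it fixes each block of $\lambda(x)$ in the limit, both the inner-product equality and the projection condition survive); pre-sorting $v^k$ instead would only deliver a block-permuted copy of $s$. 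With that step written out carefully, your proof is complete, at the price of roughly a page of argument where the paper spends one sentence.
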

\begin{proof}
It follows directly from \cite[Theorem~27]{LT20} applied to $\delta _{\mathcal{C}}$ since $\delta _{\mathcal{C}} \circ \lambda$ is the indicator function of $\lambda^{-1}(\mathcal{\mathcal{C}})$ and $N_{\mathcal{C}}(u) = \partial \delta _\mathcal{\mathcal{C}}$.
\end{proof}

%{\red local convergence?}

\subsection{Application to feasibility problems and convergence rates}\label{sec:feasp}
In this section, we consider a feasibility problem of the following form:
\begin{equation}\label{eq:feas-probl}
\text{Find } x \in \bar{\mathcal{C}} \cap \lambda^{-1}(\mathcal{C}),
\end{equation}
where $(\jAlg, \Re^r, \lambda)$ is a FTvN system, $\mathcal{C} \subset \Re^r$ and $\bar{\mathcal{C}} \subset \jAlg$ is a convex set for which $\proj_{\bar{\mathcal{C}}}(\cdot)$ is assumed to be available.
This can be reformulated as the following optimization problem.
\begin{align}\label{eq:feas-min}
\begin{split}
{\min _{x \in \jAlg}} & \quad f(x):= \frac{1}{2}\dist(x,\bar{\mathcal{C}})^2\\ 
\mbox{subject to} & \quad \lambda(x) \in \mathcal{C}.
\end{split}
\end{align}
The gradient of $f$  is given by
\[
\nabla f(x) = x-\proj_{\bar{\mathcal{C}}}(x),
\]
and if $\bar{\mathcal{C}}$ is semialgebraic then 
$f$ is semialgebraic as well \cite[Proposition~2.2.8]{Boc98}.
Note that $\nabla f$ is $1$-Lipschitz continuous. Therefore, we can use Algorithm~\ref{alg:project},
where the resulting iterative scheme can be described as 
\begin{equation}\label{eq:alt-proj}
y_k \coloneqq (1-\alpha_k)x_k + \alpha_k\proj_{\bar{\mathcal{C}}}(x_k),\quad x_{k+1} \in \proj_{\lambda^{-1}(\mathcal{C})}(y_k),\quad k=0,1,2,\ldots.
\end{equation}
In particular, when $\alpha_k \in (\ep,1-\ep)$ holds for some $\ep\in (0,1/2)$ and $f$ is semialgebraic, we obtain the convergence guarantee described in Theorem~\ref{th:proj-grad}.

Regarding convergence rates, it seems natural that this must depend on the geometry of the intersection between 
$\mathcal{\bar{C}}$ and $\lambda^{-1}(\mathcal{C})$. 
In what follows, we will verify that if the intersection is \emph{transversal at $x$}, then $f+ \delta _{\lambda^{-1}(\mathcal{C})}$ satisfy the KL property with exponent $1/2$ at $x$. We recall that  $S_1, S_2 \subseteq \jAlg$ are said to be to be \emph{transversal} at $\bar{x}$ if
\begin{equation}\label{eq:trans}
N_{S_1}(\bar{x}) \cap (-N_{S_2}(\bar{x})) = \{0\}.
\end{equation}
Transversality is related to well-known constraint qualifications used in several contexts. With that we have the following result. Note that this will not require that $\lambda$ or $\mathcal{\bar C}$ be semialgebraic.

\begin{proposition}\label{prop:trans}
If the iteration \eqref{eq:alt-proj} converges to a critical point $\bar{x}$  of 
$f = \frac{1}{2}\dist(\cdot,\bar{\mathcal{C}})^2 + \delta _{\lambda^{-1}(\mathcal{C})}(\cdot)$ 
and $f$ satisfies the KL property with exponent $1/2$ at $\bar{x}$, then 
the convergence rate is linear. 
In particular, this is true 
if $\bar{x} \in  \mathcal{\bar{C}} \cap \lambda^{-1}(\mathcal{C})$ and
$\mathcal{\bar{C}}$ and $\lambda^{-1}(\mathcal{C})$ are  transversal at $\bar{x}$.
\end{proposition}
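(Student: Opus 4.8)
The plan is to split the proof along the two sentences of the statement. The first sentence is, up to notation, a direct instance of Proposition~\ref{prop:local_linear}. Indeed, the iteration \eqref{eq:alt-proj} is precisely Algorithm~\ref{alg:project} applied to the smooth function $g(\cdot) \coloneqq \tfrac12\dist(\cdot,\bar{\mathcal{C}})^2$, whose gradient $\nabla g(x) = x - \proj_{\bar{\mathcal{C}}}(x)$ is $1$-Lipschitz; thus $L = 1$ and the stated step-size range $\alpha_k \in (\ep,1-\ep)$ coincides with $(\ep,1/L-\ep)$. Hence, once $\{x_k\}$ converges to $\bar{x}$ and $F \coloneqq g + \delta_{\lambda^{-1}(\mathcal{C})}$ has the KL property with exponent $1/2$ at $\bar{x}$, linear convergence is immediate from Proposition~\ref{prop:local_linear}. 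All the real work is therefore in the second (``in particular'') sentence.

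For that part I would show that transversality of $\bar{\mathcal{C}}$ and $\lambda^{-1}(\mathcal{C})$ at $\bar{x} \in \bar{\mathcal{C}} \cap \lambda^{-1}(\mathcal{C})$ forces the KL inequality \eqref{eq:kl2} with exponent $1/2$. First I would record that $F(\bar{x}) = 0$, and that any $x$ near $\bar{x}$ with $0 < F(x) < \nu$ (for finite $\nu$) must satisfy $x \in \lambda^{-1}(\mathcal{C})$ and $x \notin \bar{\mathcal{C}}$. For such $x$, the sum rule (as in \cite[Exercise~8.8]{RW}) gives $\partial F(x) = (x - \proj_{\bar{\mathcal{C}}}(x)) + N_{\lambda^{-1}(\mathcal{C})}(x)$, where $u \coloneqq x - \proj_{\bar{\mathcal{C}}}(x)$ lies in $N_{\bar{\mathcal{C}}}(\proj_{\bar{\mathcal{C}}}(x))$ because $\bar{\mathcal{C}}$ is convex. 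Since $F(x) = \tfrac12\dist(x,\bar{\mathcal{C}})^2$ yields $\dist(x,\bar{\mathcal{C}}) = \sqrt{2}\,F(x)^{1/2}$, the whole task reduces to producing a constant $\kappa > 0$ with $\dist(0,\partial F(x)) \geq \kappa\,\dist(x,\bar{\mathcal{C}})$, which then gives \eqref{eq:kl2} with $c = \kappa\sqrt{2}$.

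The heart of the argument, and the step I expect to be the main obstacle, is upgrading the pointwise transversality condition \eqref{eq:trans}, namely $N_{\bar{\mathcal{C}}}(\bar{x}) \cap (-N_{\lambda^{-1}(\mathcal{C})}(\bar{x})) = \{0\}$, into a uniform estimate holding throughout a neighborhood. Concretely, I would establish that there exist $\kappa > 0$ and a neighborhood $V$ of $\bar{x}$ with
\[
\norm{u + w} \geq \kappa\,\max\{\norm{u},\norm{w}\}
\]
for all $a \in \bar{\mathcal{C}} \cap V$, $b \in \lambda^{-1}(\mathcal{C}) \cap V$, $u \in N_{\bar{\mathcal{C}}}(a)$ and $w \in N_{\lambda^{-1}(\mathcal{C})}(b)$. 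This is a compactness-plus-outer-semicontinuity argument: at $\bar{x}$ alone the pointwise condition is equivalent, by positive homogeneity and compactness of the unit sphere intersected with the (closed) normal cones, to such an inequality; its propagation to a neighborhood follows by contradiction, taking normalized sequences of normal vectors at points $a_k, b_k \to \bar{x}$ and passing to the limit via the outer semicontinuity of the limiting normal cone mappings $x \mapsto N_{\bar{\mathcal{C}}}(x)$ and $x \mapsto N_{\lambda^{-1}(\mathcal{C})}(x)$ (see \cite{RW}). The subtlety is that the two cones are evaluated at \emph{different} points $a$ and $b$, which is exactly why the uniform statement must range over all nearby $a,b$ rather than over $\bar{x}$ alone.

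To finish, I would apply the uniform estimate with $a = \proj_{\bar{\mathcal{C}}}(x)$ and $b = x$. Because $\bar{x} \in \bar{\mathcal{C}}$ gives $\norm{\proj_{\bar{\mathcal{C}}}(x) - \bar{x}} \leq 2\norm{x - \bar{x}}$, shrinking $\epsilon$ guarantees both $a,b \in V$. Then, for any $w \in N_{\lambda^{-1}(\mathcal{C})}(x)$ and with $u = x - \proj_{\bar{\mathcal{C}}}(x)$, the estimate gives $\norm{u + w} \geq \kappa\norm{u} = \kappa\,\dist(x,\bar{\mathcal{C}})$; taking the infimum over $w$ yields $\dist(0,\partial F(x)) \geq \kappa\,\dist(x,\bar{\mathcal{C}})$, which closes the argument as explained above. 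I anticipate no difficulty in these final steps; the entire weight of the proof rests on the neighborhood version of transversality in the previous paragraph.
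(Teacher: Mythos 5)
Your proposal is correct, but it takes a genuinely different route for the part that carries all the weight. The paper disposes of the first sentence exactly as you do, via Proposition~\ref{prop:local_linear}, and then handles the ``in particular'' claim by a single citation: transversality implies the KL property with exponent $1/2$ by \cite[Theorem~5]{CPTZ20} (applied with $A$ the identity map, $D = \bar{\mathcal{C}}$ and the nonsmooth term $\delta_{\lambda^{-1}(\mathcal{C})}$). You instead reprove the needed special case of that result from scratch: you write $\partial F(x) = \bigl(x - \proj_{\bar{\mathcal{C}}}(x)\bigr) + N_{\lambda^{-1}(\mathcal{C})}(x)$ via the sum rule, reduce the KL inequality to the error bound $\dist(0,\partial F(x)) \geq \kappa\,\dist(x,\bar{\mathcal{C}})$, and obtain $\kappa$ by upgrading the pointwise condition $N_{\bar{\mathcal{C}}}(\bar{x}) \cap (-N_{\lambda^{-1}(\mathcal{C})}(\bar{x})) = \{0\}$ to a uniform angle estimate between normal vectors at nearby base points, using positive homogeneity, compactness of the sphere, and outer semicontinuity of the limiting normal cone mappings (\cite[Proposition~6.6]{RW}). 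That propagation argument is sound --- the normalization $\max\{\norm{u_k},\norm{w_k}\}=1$ forces a nonzero limit pair $u=-w$ in the two normal cones at $\bar{x}$, contradicting transversality --- and your placement of $u = x - \proj_{\bar{\mathcal{C}}}(x)$ in $N_{\bar{\mathcal{C}}}(\proj_{\bar{\mathcal{C}}}(x))$ is exactly where convexity of $\bar{\mathcal{C}}$ is used. What each approach buys: the paper's citation is shorter and inherits the generality of \cite{CPTZ20} (which also covers a nontrivial linear map $A$), while your argument is self-contained, exposes the actual mechanism by which transversality yields exponent $1/2$, and does not require the reader to unwind the notation of an external theorem.
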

\begin{proof}
If $\mathcal{\bar{C}}$ and $\lambda^{-1}(\mathcal{C})$ intersect transversally at $\bar{x}$, 
then $f$ is a KL function with exponent $1/2$  by \cite[Theorem~5]{CPTZ20}\footnote{The correspondence between the notation in \cite[Theorem~5]{CPTZ20} and our case is as follows. $A$ is the identity map, $D$ is $\mathcal{\bar{C}}$ and $\mathcal{C}$ is $\delta_{\lambda^{-1}(\mathcal{C})}$. }. In any case, the result follows 
from Proposition~\ref{prop:local_linear}.
\end{proof}

It is interesting to note that the limiting case  where $\alpha_k$ is taken to be $1$ in \eqref{eq:alt-proj} corresponds to the \emph{alternating projections algorithm}, where each iteration consists of successively projecting onto $\bar{\mathcal{C}}$ and $\lambda^{-1}(C)$. 
Although the alternating projections algorithm falls outside of the scope of Theorem~\ref{th:proj-grad}, it is known 
that it locally converges with a linear rate under a weaker assumption called \emph{intrinsical transversality}, see \cite[Theorem~6.1]{DIL15} and \cite{NR15} for a related work.
On the other hand, it is not clear if the global guarantee in Theorem~\ref{th:proj-grad} also holds for alternating projections under semialgebraic assumptions.

\paragraph{The convex case}
In the special case where $ \mathcal{\bar C}$ and 
$\lambda^{-1}(\mathcal{C})$ are convex the situation is far more favourable. First of all, the iteration \eqref{eq:alt-proj} always converges to a point in the intersection $ \mathcal{\bar C} \cap \lambda^{-1}(\mathcal{C})$, provided that the problem is feasible. 
This is a consequence of classical convergence results of the proximal gradient method for convex functions, but it can also be obtained by observing that the iteration in \eqref{eq:alt-proj} falls under the scope of several frameworks for analyzing convex feasibility problems and beyond. These frameworks are also able to take care of the limiting case $\alpha _k = 1$, e.g., \cite[pg.~2]{BB96}, \cite{blt17}. 

In the convex case, the interesting question is not \emph{whether} \eqref{eq:alt-proj} converges but \emph{how fast does it converge}. Under convexity, it is known that the convergence rate of many algorithms is related to the underlying error bound that holds between sets. In particular, when a so-called \emph{Lipschitz error bound} (also called \emph{bounded linear regularity} in some papers, e.g., \cite{BB96}) holds,  several algorithms are known to converge at a linear rate.
What is somewhat less known is that under even weaker error bound assumptions \emph{concrete}\footnote{By \emph{concrete} we mean that the convergence rate is upper bounded in terms of a function of the iteration number $k$ (e.g., see \cite[Theorem~3.1]{blt17}). %instead of, say, being expressed in terms of functions of the iterates $x_k$ which typically would still require the solution of a recurrence relation in order to get an expression in terms of $k$.
} 	convergence rates are still obtainable. For example, see \cite[Section~3]{blt17} for the case of H\"olderian error bounds in the setting of fixed point problems or \cite[Sections~4 and 5]{LL22} for the case of general error bounds for convex feasibility problems.

Given that the convex case is easier to handle, a related question is to understand 
when does $\lambda^{-1}(\mathcal{C})$ becomes a convex set. In the case of Jordan algebras this is very well-understood. In particular, 
if $\mathcal{C}$ is permutation invariant, 
then $\lambda^{-1}(\mathcal{C})$ is convex if and only if $\mathcal{C}$ is convex \cite[Theorem~3]{JG17}, see also \cite[Theorem~27]{baes07}.
In the general case of FTvN systems, the situation is more subtle and additional conditions seem necessary in order to obtain similar results, see \cite{JG23}.

\section{Eigenvalue programming examples}\label{sec:ex}

In this section, we discuss two examples of the eigenvalue programming problem \eqref{eq:eig_prog} and show numerical experiments for the projected gradient method described in Algorithm~\ref{alg:project}. 
In all examples, the problem data are semialgebraic so we are under the scope of Theorem~\ref{th:proj-grad}.
All experiments are implemented in Julia and conducted on a 3.0GHz Intel Xeon E5-1680v2 processor with 64GB of RAM.\footnote{The implementation is available in \url{https://github.com/ito-masaru/pgm-eig-prog}}

\subsection{Inverse eigenvalue problems}\label{sec:inv}
Given a FTvN system $(\jAlg, \Re^r, \lambda)$, $\lambda^* \in \lambda(\jAlg)$ and $a_i \in \jAlg$ ($i=0,1,\ldots,d$), we consider the \emph{inverse eigenvalue problem}:
\begin{equation}\label{eq:gen-IEQ}
\text{Find } c \in \Re^d \text{ such that }
\lambda(a_0 + c_1a_1 + \cdots + c_da_d) = \lambda^*.
\end{equation}
For the special case $\jAlg=\S^n$, this problem has been extensively researched \cite{Friedland87,chen_chu_96, chu98,chu_golub_2002,chu_golub_2005}. However, beyond $\S^n$, we are not aware of a systematic study of inverse eigenvalue problems. 

The problem \eqref{eq:gen-IEQ} is equivalent to the feasibility problem \eqref{eq:feas-probl} with
$$
\bar{\mathcal{C}} \coloneqq a_0 + \spanVec\{a_1,\ldots,a_d\},\quad \mathcal{C} \coloneqq \{\lambda^*\}.
$$
In this case, the iterative scheme \eqref{eq:alt-proj} reduces to
\begin{equation}\label{eq:alg-IEP}
y_k = (1-\alpha_k)x_k + \alpha_k\proj_{\bar{\mathcal{C}}}(x_k),\quad x_{k+1} \in U(y_k,\lambda^*),
\end{equation}
since $\proj_{\lambda^{-1}(\mathcal{C})}(y) = U(y,\lambda^*)$ holds by Corollary~\ref{cor:proj-spec}.
%It is noteworthy that, when $\jAlg=\S^n$, the method \eqref{eq:alg-IEP} with $\alpha_k=1$ is equivalent to the one proposed in \cite{chen_chu_96}, although we need to choose the step-size so that $\alpha_k \in (\ep,1-\ep)$ (for some $\ep>0$) in order to apply the convergence result in Theorem~\ref{th:proj-grad}.

\subsubsection{Numerical experiment}
Here we show some numerical results of the method \eqref{eq:alg-IEP} applied to the inverse eigenvalue problem \eqref{eq:gen-IEQ} in the case
\begin{equation}\label{eq:alg-num-IEP}
\jAlg=\underbrace{\Re^{n+1} \times \cdots \times \Re^{n+1}}_{m \text{ times}} \times \underbrace{\mathcal{S}^n\times \cdots \times \mathcal{S}^n}_{\ell \text{ times}},
\end{equation}
where we regard $\Re^{n+1}$ and $\S^n$ as the Jordan algebras in Examples~\ref{ex:soc} and~\ref{ex:sym}, respectively. 
We examine both the block-wise eigenvalue maps $\lambda$ and the ordered one $\lambda^\downarrow$ described in \eqref{eq:prod-FTvN} and \eqref{eq:lambda_ord}, respectively.
The goal of this experiment is to check the behavior of Algorithm~\ref{alg:project} and see how far an initial point must be in order to ensure convergence to a solution of \eqref{eq:gen-IEQ}.

Given $(\ell,m,n,d)$, we examine 10 random instances of the problem \eqref{eq:alg-num-IEP} under the FTvN system \eqref{eq:alg-num-IEP} by setting $\lambda^* \coloneqq \lambda(a_0+c_1a_1+\cdots+c_da_d) \in \jAlg$, where $c_i \in \Re$ and components of $a_i \in \jAlg$ are generated from the uniform distribution over $[0,1]$. We run the method \eqref{eq:alg-IEP} with step-size $\alpha_k=0.99$ until it finds an iterate $x_k$ satisfying the termination criterion
\[
\dist(x_k, \bar{\mathcal{C}}) \leq 10^{-3}
\]
(note that we have $\lambda(x_k)=\lambda^*$ for $k\geq 1$ by construction).
If this condition fails to be satisfied after $10000$ iterations, we restart the method and change the initial point. 
The initial point $x_0$ is chosen as follows:
\[
x_0:=x^* + 100 \norm{x^*} u/2^r,
\]
where $x^*\coloneqq a_0+c_1a_1+\cdots+c_ma_m$ is an optimal solution, $u \in \jAlg$ is generated from the uniform distribution on the unit sphere of $\jAlg$ and $r \geq 0$ is the number of restarts of the method. Note that $r$ controls the relative distance from $x_0$ to the optimal solution since $\norm{x_0-x^*}/\norm{x^*}=100/2^r$ holds.

Tables~\ref{table:IEP} and \ref{table:IEP-order} show numerical results for some problem instances under the eigenvalue maps $\lambda$ and $\lambda^\downarrow$, respectively. For each $(\ell,m,n)$, the dimension $d$ is chosen as $d=\lfloor\dim(\jAlg)/\rho\rfloor$ for $\rho \in \{0.2, 0.4, 0.6, 0.8\}$. We see that the proposed method successfully converges to an optimal solution (i.e., a feasible solution to \eqref{eq:gen-IEQ}) in many cases, while some instances require several restarts. Also employing the block-wise eigenvalue map $\lambda$ is relatively better than the ordered one $\lambda^\downarrow$ in average.
Regarding the performance, we can see that the number of iterations is decreasing with respect to the parameter $d$. 
This is somewhat puzzling and at this point we can only speculate on the reasons for that. 
A possibility is that as  $d$ increases, the dimension of $\bar{\mathcal{C}}$ increases as well.
Given the random nature of the instances, $\bar{\mathcal{C}} \cap \lambda^{-1}(\{\lambda^*\})$ might get be ``better-conditioned'' as $d$ increases. For example, it could be case that the region corresponding to linear convergence as in Propositions~\ref{prop:local_linear} and \ref{prop:trans} gets larger.

%%%
\begin{table}[htbp]
\centering
\begin{tabular}{cccc|cccc|cccc}
\hline

\multirow{2}{*}{$\ell$} & \multirow{2}{*}{$m$} & \multirow{2}{*}{$n$} & \multirow{2}{*}{$d$} & \multicolumn{4}{|c|}{Iterations} & \multicolumn{4}{|c}{Restarts}  \\
    &    &     &     & mean & max & min & std & mean & max & min & std \\
\hline\hline

1 & 0 & 10 & 11 & 1836.4 & 4136 & 574 & 1205.8 & 0.8 & 5 & 0 & 1.8\\
1 & 0 & 10 & 22 & 107.0 & 312 & 34 & 78.6 & 0.0 & 0 & 0 & 0.0\\
1 & 0 & 10 & 33 & 31.6 & 58 & 20 & 11.5 & 0.0 & 0 & 0 & 0.0\\
1 & 0 & 10 & 44 & 19.2 & 45 & 10 & 10.7 & 0.0 & 0 & 0 & 0.0\\\hline
1 & 1 & 10 & 13 & 2255.9 & 4110 & 336 & 1135.0 & 3.1 & 7 & 0 & 3.4\\
1 & 1 & 10 & 26 & 82.4 & 152 & 47 & 37.6 & 0.0 & 0 & 0 & 0.0\\
1 & 1 & 10 & 39 & 47.1 & 63 & 29 & 10.8 & 0.0 & 0 & 0 & 0.0\\
1 & 1 & 10 & 52 & 32.0 & 56 & 15 & 12.2 & 0.0 & 0 & 0 & 0.0\\\hline
1 & 5 & 10 & 22 & 3049.5 & 9666 & 564 & 2983.9 & 2.3 & 7 & 0 & 3.1\\
1 & 5 & 10 & 44 & 102.6 & 189 & 52 & 37.6 & 0.0 & 0 & 0 & 0.0\\
1 & 5 & 10 & 66 & 59.2 & 95 & 28 & 18.6 & 0.0 & 0 & 0 & 0.0\\
1 & 5 & 10 & 88 & 26.4 & 42 & 16 & 7.2 & 0.0 & 0 & 0 & 0.0\\\hline
3 & 0 & 10 & 33 &5328.8 & 9480 & 1507 & 2515.0 & 4.7 & 9 & 0 & 3.7\\
3 & 0 & 10 & 66 &110.3 & 162 & 67 & 30.6 & 0.0 & 0 & 0 & 0.0\\
3 & 0 & 10 & 99 &50.7 & 65 & 41 & 8.5 & 0.0 & 0 & 0 & 0.0\\
3 & 0 & 10 & 132 &56.8 & 93 & 25 & 20.8 & 0.0 & 0 & 0 & 0.0\\\hline
3 & 1 & 10 & 35 &4497.7 & 7025 & 1241 & 2059.6 & 5.6 & 10 & 0 & 3.7\\
3 & 1 & 10 & 70 &132.9 & 218 & 96 & 34.6 & 0.0 & 0 & 0 & 0.0\\
3 & 1 & 10 & 105 &57.2 & 86 & 37 & 16.2 & 0.0 & 0 & 0 & 0.0\\
3 & 1 & 10 & 140 &42.5 & 69 & 33 & 11.2 & 0.0 & 0 & 0 & 0.0\\\hline
3 & 5 & 10 & 44 &5138.8 & 9206 & 1325 & 2886.4 & 3.9 & 9 & 0 & 3.6\\
3 & 5 & 10 & 88 &127.8 & 196 & 84 & 33.5 & 0.0 & 0 & 0 & 0.0\\
3 & 5 & 10 & 132 &79.5 & 106 & 54 & 16.4 & 0.0 & 0 & 0 & 0.0\\
3 & 5 & 10 & 176 &56.1 & 77 & 40 & 11.0 & 0.0 & 0 & 0 & 0.0\\
\hline
\end{tabular}
\caption{Numerical results for problem \eqref{eq:gen-IEQ} under the block-wise eigenvalue map $\lambda$. The column ``Iterations'' collects the summary of the number of iterations at the final run of the proposed method, showing the mean, the maximum, the minimum and the sample standard deviation. The summary of the number of restarts is given in the column ``Restarts''.}
\label{table:IEP}
\end{table}

\begin{table}[htbp]
\centering
\begin{tabular}{cccc|cccc|cccc}
\hline

\multirow{2}{*}{$\ell$} & \multirow{2}{*}{$m$} & \multirow{2}{*}{$n$} & \multirow{2}{*}{$d$} & \multicolumn{4}{|c|}{Iterations} & \multicolumn{4}{|c}{Restarts}  \\
    &    &     &     & mean & max & min & std & mean & max & min & std \\
\hline\hline
1 & 0 & 10 & 11 & 1836.4 & 4136 & 574 & 1205.8 & 0.8 & 5 & 0 & 1.8\\
1 & 0 & 10 & 22 & 107.0 & 312 & 34 & 78.6 & 0.0 & 0 & 0 & 0.0\\
1 & 0 & 10 & 33 & 31.6 & 58 & 20 & 11.5 & 0.0 & 0 & 0 & 0.0\\
1 & 0 & 10 & 44 & 19.2 & 45 & 10 & 10.7 & 0.0 & 0 & 0 & 0.0\\\hline
1 & 1 & 10 & 13 & 2255.9 & 4110 & 336 & 1135.0 & 3.1 & 7 & 0 & 3.4\\
1 & 1 & 10 & 26 & 82.4 & 152 & 47 & 37.6 & 0.0 & 0 & 0 & 0.0\\
1 & 1 & 10 & 39 & 56.8 & 160 & 29 & 37.4 & 0.0 & 0 & 0 & 0.0\\
1 & 1 & 10 & 52 & 39.8 & 64 & 24 & 12.6 & 0.0 & 0 & 0 & 0.0\\\hline
1 & 5 & 10 & 22 & 2049.6 & 4340 & 494 & 1477.4 & 3.0 & 10 & 0 & 3.9\\
1 & 5 & 10 & 44 & 1937.7 & 8993 & 74 & 2856.9 & 1.5 & 6 & 0 & 2.5\\
1 & 5 & 10 & 66 & 78.8 & 127 & 46 & 30.5 & 0.0 & 0 & 0 & 0.0\\
1 & 5 & 10 & 88 & 24.6 & 36 & 15 & 6.3 & 0.0 & 0 & 0 & 0.0\\\hline
3 & 0 & 10 & 33 &5426.8 & 8548 & 1950 & 2319.9 & 6.3 & 11 & 0 & 4.3\\
3 & 0 & 10 & 66 &91.5 & 143 & 62 & 23.2 & 0.0 & 0 & 0 & 0.0\\
3 & 0 & 10 & 99 &49.4 & 62 & 36 & 10.2 & 0.0 & 0 & 0 & 0.0\\
3 & 0 & 10 & 132 &56.5 & 100 & 21 & 25.5 & 0.0 & 0 & 0 & 0.0\\\hline
3 & 1 & 10 & 35 &4907.9 & 9901 & 1943 & 2944.5 & 7.3 & 12 & 1 & 4.2\\
3 & 1 & 10 & 70 &110.1 & 141 & 74 & 23.4 & 0.8 & 5 & 0 & 1.7\\
3 & 1 & 10 & 105 &727.5 & 3546 & 30 & 1201.4 & 1.8 & 6 & 0 & 2.4\\
3 & 1 & 10 & 140 &119.3 & 278 & 47 & 81.1 & 0.0 & 0 & 0 & 0.0\\\hline
3 & 5 & 10 & 44 &4351.8 & 7261 & 1221 & 2172.1 & 9.3 & 12 & 7 & 1.6\\
3 & 5 & 10 & 88 &142.3 & 208 & 77 & 41.2 & 2.3 & 5 & 0 & 2.1\\
3 & 5 & 10 & 132 &950.1 & 2505 & 213 & 682.6 & 0.0 & 0 & 0 & 0.0\\
3 & 5 & 10 & 176 &62.1 & 103 & 44 & 16.8 & 0.0 & 0 & 0 & 0.0\\
\hline
\end{tabular}
\caption{Numerical results for problem \eqref{eq:gen-IEQ} under the ordered eigenvalue map $\lambda^\downarrow$}
\label{table:IEP-order}
\end{table}

%%%
\subsection{Vanishing Quadratic Constraints}\label{sec:vqc}
In this section we take a look at the following problem.
Given $m$ convex quadratic inequalities and an integer $\ell \in \{0,1,\ldots,m\}$, we would like to find a common solution $x\in \Re^n$ that satisfies at least $\ell$ with equality as follows.
%\[
%\left\{
%\begin{array}{ll}
%\norm{A_i x +b_i}_2 \leq \innprod{c_i}{x} + d_i,~i=1,\ldots,m, & \cdots \text{(i)}\\
%\text{At least } k \text{ of these inequalities are tight.} & \cdots \text{(ii)}\\
%\end{array}
%\right.
%\]
\begin{subnumcases}{\label{eq:vqc}}
	\norm{A_i x +b_i}_2 \leq \innprod{c_i}{x} + d_i, & $i=1,\ldots,m,$ \label{eq:vqc1} \\
	\text{At least } \ell \text{ of these inequalities are tight.}  \label{eq:vqc2}
\end{subnumcases}
%\begin{align}
%\norm{A_i x +b_i}_2 \leq \innprod{c_i}{x} + d_i,~i=1,\ldots,m, \\
%\text{At least } k \text{ of these inequalities are tight.} 
%\end{align}
Here, $A_i\in \Re^{n_i\times n}$, $b_i \in \Re^{n_i}$, $c_i \in \Re^n$, $d_i \in \Re$ and $\ell \in \{0,1,\ldots,m\}$ are given. We refer to a constraint in format of the \eqref{eq:vqc1} and \eqref{eq:vqc2} as \emph{quadratic inequality with a vanishing constraint} or a \emph{vanishing quadratic constraint} for short.
In this subsection, we will discuss how to reformulate a vanishing quadratic constraint in the form of a feasibility problem as in \eqref{eq:feas-probl}.

Let $\jAlg \coloneqq \Re^{n_1+1}\times \cdots \times \Re^{n_m+1}$ and $\stdCone \coloneqq \SOC{2}{n_1+1}\times \cdots \times \SOC{2}{n_m+1}$ be a direct product of second-order cones. We consider that each $\Re^{n_i+1}$ is a Euclidean Jordan algebra as in Example~\ref{ex:soc} and the corresponding cone is $\SOC{2}{n_i+1}$.
With that, $(\jAlg, \Re^{2m},\lambda^\downarrow)$ is a FTvN system where $\lambda^\downarrow$ is as in \eqref{eq:lambda_ord}.  

Next, we define
\[b\coloneqq(b_1, d_1, \ldots, b_m,d_m) \in \jAlg,\qquad A:\Re^n\to\jAlg,~~ Ax \coloneqq (A_1x, \innprod{c_1}{x},\ldots, A_mx, \innprod{c_m}{x}).
\]
We have the following result.
\begin{theorem}[Vanishing quadratic constraints as a rank constraint]\label{theo:vqc}
Suppose that $x \in \Re^n$ satisfies \eqref{eq:vqc1} and \eqref{eq:vqc2}, then 
\begin{equation}\label{eq:vqc_t}
Ax+b \in  \stdCone\quad \text{ and }\quad \matRank(Ax+b) \leq 2m-\ell.
\end{equation}
Conversely, if $x \in \Re^n$ satisfies \eqref{eq:vqc_t}
and $(A_ix+b_i,\innprod{c_i}{x}+d_i) \ne 0$ holds for all $i$, then  $x$ satisfies \eqref{eq:vqc1} and \eqref{eq:vqc2}.
\end{theorem}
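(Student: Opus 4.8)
The plan is to exploit the explicit spectral decomposition of the second-order cone algebra from Example~\ref{ex:soc} to translate both the conic membership and the tightness count into statements about eigenvalues. Writing $Ax+b$ block-wise, its $i$-th block is $(A_ix+b_i,\innprod{c_i}{x}+d_i)\in\Re^{n_i}\times\Re$, so by \eqref{eq:spect_dec} its two eigenvalues are
\[
\lambda_\pm^{(i)}=\tfrac{\sqrt2}{2}\bigl((\innprod{c_i}{x}+d_i)\pm\norm{A_ix+b_i}_2\bigr),\qquad \lambda_+^{(i)}\ge\lambda_-^{(i)}.
\]
First I would record the two key correspondences. On one hand, the $i$-th block lies in $\SOC{2}{n_i+1}$ exactly when $\lambda_-^{(i)}\ge0$ (which then forces $\lambda_+^{(i)}\ge0$), and this is precisely the inequality \eqref{eq:vqc1}; hence $Ax+b\in\stdCone$ is equivalent to \eqref{eq:vqc1} holding for every $i$. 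On the other hand, the $i$-th inequality is \emph{tight} exactly when $\norm{A_ix+b_i}_2=\innprod{c_i}{x}+d_i$, i.e.\ when $\lambda_-^{(i)}=0$. Since the rank in \eqref{eq:rank} counts nonzero eigenvalues and reordering via $\lambda^\downarrow$ does not change this count, I can work block by block.

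For the forward direction, assume \eqref{eq:vqc1} and \eqref{eq:vqc2}. The first gives $Ax+b\in\stdCone$ directly. For the rank bound, each of the at least $\ell$ tight inequalities produces a block with $\lambda_-^{(i)}=0$, so at least $\ell$ of the $2m$ eigenvalues of $Ax+b$ vanish. Therefore at most $2m-\ell$ eigenvalues are nonzero, giving $\matRank(Ax+b)\le 2m-\ell$.

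For the converse, assume \eqref{eq:vqc_t} together with $(A_ix+b_i,\innprod{c_i}{x}+d_i)\ne0$ for all $i$. From $Ax+b\in\stdCone$ I immediately recover \eqref{eq:vqc1}, so it remains to produce $\ell$ tight inequalities from the rank bound, and here the nondegeneracy hypothesis is exactly what is needed. Because each block lies in the cone, $\lambda_+^{(i)}\ge\lambda_-^{(i)}\ge0$; and $\lambda_+^{(i)}=0$ would force both $\innprod{c_i}{x}+d_i=0$ and $\norm{A_ix+b_i}_2=0$, i.e.\ the entire $i$-th block would vanish, contradicting nondegeneracy. Hence $\lambda_+^{(i)}>0$ for every $i$, so each block contributes \emph{at most one} zero eigenvalue, and every vanishing eigenvalue must be a $\lambda_-^{(i)}=0$, i.e.\ a tight inequality. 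Since $\matRank(Ax+b)\le 2m-\ell$ forces at least $\ell$ of the $2m$ eigenvalues to vanish, at least $\ell$ distinct blocks satisfy $\lambda_-^{(i)}=0$, which is \eqref{eq:vqc2}.

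The main obstacle---and the only place the argument is more than a bookkeeping of signs---is this last counting step in the converse: without the assumption $(A_ix+b_i,\innprod{c_i}{x}+d_i)\ne0$, a single degenerate block could equal the zero element and contribute two vanishing eigenvalues while representing only one tight inequality, so the rank could drop by $2$ per tight constraint and the implication ``$\ell$ zeros $\Rightarrow$ $\ell$ tight inequalities'' would fail. The nondegeneracy condition guarantees an injective correspondence between vanishing eigenvalues and tight inequalities, which is precisely what makes the threshold $2m-\ell$ the correct one.
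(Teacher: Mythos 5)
Your proof is correct and follows essentially the same route as the paper's: both reduce everything to the block-wise spectral decomposition of the second-order cone algebra, identify tightness of the $i$-th inequality with $\lambda_-^{(i)}=0$, and use the nondegeneracy hypothesis to ensure each block contributes at most one vanishing eigenvalue (equivalently, has rank $1$ or $2$), so that the threshold $2m-\ell$ counts correctly. Your phrasing in terms of counting zero eigenvalues is just an equivalent bookkeeping of the paper's count of block ranks.
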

\begin{proof}
First, suppose that  $x \in \Re^n$ satisfies \eqref{eq:vqc1} and \eqref{eq:vqc2}. From, \eqref{eq:vqc1} we obtain 
$Ax+b \in \stdCone$.
Next, let $z \coloneqq Ax+b$, so that $z = (z_1,t_1\ldots, z_m,t_m)$ can be divided in $m$ blocks 
so that \[
(z_i,t_i)  = (A_ix+b_i, \innprod{c_i}{x}+d_i)\] 
holds for $i \in \{1, \ldots, m\}$. 
We have that $\norm{A_i x +b_i}_2 = \innprod{c_i}{x}+d_i$
if and only if $(z_i,t_i)$ belongs to the boundary of $\SOC{2}{n_i+1}$. 

Equivalently, the block $(z_i,t_i) \in \SOC{2}{n_i+1}$ belongs to the boundary of $\SOC{2}{n_i+1}$ if and only if the smallest eigenvalue (i.e., $\lambda_{-}$ in \eqref{eq:spect_dec}) of  $(z_i,t_i) $ vanishes. 
Therefore, the rank of $(z_i,t_i)$ in the algebra $\Re^{n_i+1}$ must be $0$ or $1$. 
Since the rank of $z$ is the sum of ranks of the individual blocks, \eqref{eq:vqc2} implies that $\rank(z) \leq \ell + 2(m-\ell) = 2m -\ell$ holds.

Conversely, suppose that $x$ satisfies \eqref{eq:vqc_t}.
Then, in particular, $Ax + b \in \stdCone$ holds, which, by definition implies that that \eqref{eq:vqc1} is satisfied. 
Under the assumption that  $Ax + b \in \stdCone$ and $(A_ix+b_i,\innprod{c_i}{x}+d_i)$
is never zero, we have that the rank of $(A_ix+b_i,\innprod{c_i}{x}+d_i)$ is either $1$ or $2$ and the former happens if and only if  $\norm{A_i x +b_i}_2 = \innprod{c_i}{x} + d_i$.
Therefore, if $\rank(Ax + b) \leq 2m - \ell$, then at least $\ell$ of the inequalities are tight, since otherwise we would have $\rank(Ax + b) \geq 2(m-(\ell+1)) + (\ell+1)> 2m-\ell$.
\end{proof}

%Then, $x \in \Re^n$ satisfies \eqref{eq:vqc1} if and only if 
%\[
% Ax+b \in \stdCone.
%\]
%Let $z = Ax+b$, so that $z = (z_1,t_1\ldots, z_m,t_m)$ can be divided in $m$ blocks 
%and $(z_i,t_i)  = (A_ix, \innprod{c_i}{x}+b_i, \innprod{c_i}{x}+d_i )$. 
%%Let 
%%$\stdCone \coloneqq  \bigoplus_{i=1}^m L_2^{n_i+1}$.
%We have that $\norm{A_i x +b_i}_2 = \innprod{c_i}{x}$
%if and only if $(z_i,t_i)$ belongs to the boundary of $L_2^{n_i+1}$. By its turn, $(z_i,t_i)$ belongs to the boundary of $L_2^{n_i+1}$ if and only if 
%$(z_i,t_i)  \in L_2^{n_i+1}$ and the rank of $(z_i,t_i)$  is $0$ or $1$.
%
%Assuming that $(A_ix+b_i,\innprod{c_i}{x}+d_i) \ne 0$ for all $i$,
%we conclude that $x$ satisfies \eqref{eq:vqc1} and \eqref{eq:vqc2} if and only if 
%\[
%Ax+b \in  \stdCone\quad \text{ and }\quad \matRank(Ax+b) \leq 2m-\ell.
%\]
If $\ell \geq 1$, we have equivalently that 
$x$ satisfies \eqref{eq:vqc_t} if and only if
\[
\lambda^\downarrow _{2m}(Ax+b) = \cdots = \lambda^\downarrow_{2m-\ell+1}(Ax+b) = 0.
\]
The final step to reformulate the problem in the format described in Section~\ref{sec:feasp} is to perform the variable transformation 
$y = Ax + b$. This leads ot the the following feasibility problem
\begin{equation}\label{eq:vqc3}
\mathrm{find}\quad y \in (\matRange(A)+b) \cap ( \stdCone \cap \{y \mid \rank(y) \leq 2m -\ell\} ).
\end{equation}
Finally, if we let $\bar{\mathcal{C}} \coloneqq \matRange(A)+b$, $\mathcal{C} \coloneqq \{u \in \Re^{2m}_{+} \mid u _{2m} = \cdots = u_{2m-\ell+1} = 0 \}$, we get the following equivalent problem
\begin{equation}\label{eq:vqc4}
\mathrm{find}\quad y \in \bar{\mathcal{C}} \quad \mathrm{and}\quad \lambda^\downarrow(y) \in  \mathcal{C},
\end{equation}
which can be solved as described in Section~\ref{sec:feasp}. 

We remark that the projection of $v \in \Re^r_{\downarrow}$ onto $\mathcal{C}$ is easily computable as
\[\proj_{\mathcal{C}}(v) = (\max(v_1,0),\ldots,\max(v_{2m-\ell},0),0,\ldots,0),\quad \forall v \in \Re^r_{\downarrow}.
\]
Therefore, using Corollary~\ref{cor:proj-spec}, the iterative scheme \eqref{eq:alt-proj} reduces to
\begin{equation*}%\label{eq:alg-SOCP}
z_k = (1-\alpha_k)y_k + \alpha_k\proj_{\bar{\mathcal{C}}}(y_k),\quad y_{k+1} \in U(z_k,\proj_{\mathcal{C}}(\lambda^\downarrow(z_k)) ).
\end{equation*}
By its turn, an element of $U(z_k,\proj_{\mathcal{C}}(\lambda^\downarrow(z_k)) )$ can be obtained explicitly through \eqref{eq:ucmu_ord} and the spectral decomposition in the algebra of second-order cones as described in Example~\ref{ex:soc}.

%Then, an element in $\proj_{\lambda^{-1}(\mathcal{C})}$

A final observation is that in order to obtain a ``if and only if'' correspondence between \eqref{eq:vqc3} and the pair \eqref{eq:vqc1} and \eqref{eq:vqc2},  Theorem~\ref{theo:vqc} requires the 
assumption that $(A_ix+b_i,\innprod{c_i}{x}+d_i) \ne 0$ holds for all $i$ whenever $x$ satisfies \eqref{eq:vqc1} and \eqref{eq:vqc2}. 
An important case where this assumption is satisfied is when all the $c_i$'s are zero and the $d_i$'s are nonzero as in the next subsection.

%The only outstanding detail is how to recover a solution to \eqref{eq:vqc1} and \eqref{eq:vqc2} from a solution to \eqref{eq:vqc4}.

\subsubsection{Intersection of ellipsoids and a numerical experiment}\label{sec:ellipsoids}
As a special case of \eqref{eq:vqc}, 
suppose we are given $m$ ellipsoids and we would like to find an intersection point that lies in the boundary of at least $\ell$ of them.
Formally, given positive definite matrices $Q_i \in \S^n$ and $p_i \in \Re^n$ for $i = 1, \ldots, m$, our problem is to 
find $x \in \Re^n$ satisfying 
\begin{subnumcases}{\label{eq:ei}} 
(x-p_i)^TQ_i(x-p_i) \leq 1 & $i=1,\ldots,m,$ \label{eq:ei1} \\
\text{At least } \ell \text{ of these inequalities are tight.}  \label{eq:ei2}
\end{subnumcases}
The corresponding $A$ and $b$ are given by
\begin{equation}\label{eq:Ab-ellip}
b\coloneqq (-Q_1^{1/2}p_1,1,\ldots,-Q_m^{1/2}p_m,1), \qquad Ax \coloneqq (Q_1^{1/2}x, 0,\cdots,Q_m^{1/2}x,0).
\end{equation}
We can rewrite \eqref{eq:ei} as
\begin{numcases}{}
Ax+b \in  \stdCone\coloneqq \underbrace{\SOC{2}{n+1}\times \cdots \times \SOC{2}{n+1}}_{m \text{ times}} \notag\\
\matRank(Ax+b) \leq 2m-\ell \notag 
\end{numcases}
which is equivalent to 
\begin{equation}\label{eq:vc}
\mathrm{find}\quad y \in \bar{\mathcal{C}} \quad \mathrm{and}\quad \lambda^\downarrow(y) \in  \mathcal{C},
\end{equation}
where $\bar{\mathcal{C}} \coloneqq \matRange(A)+b$ and $\mathcal{C} \coloneqq \{u \in \Re^{2m}_{\downarrow} \mid u _{2m} = \cdots = u_{2m-\ell+1} = 0 \}$.
We observe that in this case, the ``$c_i$'s'' and ``$d_i$'s''in Theorem~\ref{theo:vqc} are all $0$ and $1$, respectively. 
So, indeed, \eqref{eq:vc} and the pair \eqref{eq:ei1}, \eqref{eq:ei2} are equivalent.

\paragraph{Recovering $x$ from $y$}
Here we remark that a solution to \eqref{eq:ei} can be easily recovered from a solution to \eqref{eq:vc}.
Let $y=(\bar{y}_1,y_{10},\ldots,\bar{y}_m,y_{m0}) \in \bar{\mathcal{C}}$. Let us check that there is a unique $x$ satisfying $y=Ax+b$.
By \eqref{eq:Ab-ellip}, the relation $y=Ax+b$ is equivalent to
\[
\bar{y}_{i} = Q_i^{1/2}(x-p_i),\quad y_{i0}=1,\quad i=1,\ldots,m.
\]
The former equation is equivalent to
\[
x = Q_1^{-1/2}\bar{y}_1 + p_1 = \cdots = Q_m^{-1/2}\bar{y}_m + p_m.
\]
Numerically, we can also average the different expressions to $x$ which leads to the following formula.
\begin{equation}\label{eq:recov-x}
x = \frac{1}{m}\sum_{i=1}^m (Q_i^{-1/2}\bar{y}_i + p_i).
\end{equation}

\paragraph{Numerical results}
Here, we show numerical results of
applying Algorithm~\ref{alg:project} to \eqref{eq:feas-min} in order the solve the problem \eqref{eq:vc} in the case $(n,m,\ell)=(2,3,1),(2,3,2)$. We generated randomly three ellipsoids centered at the origin as illustrated in Figure~\ref{fig:ellip}.
We use constant step-size $\alpha_k=0.99$. Let $\{y^k\}$ be the generated sequence and $x^k$ be the approximate solution recovered by $y^k$ using the formula \eqref{eq:recov-x}. We terminate the algorithm if
$$
\dist(y^k, \bar{\mathcal{C}}) \leq 10^{-3}
$$
(Note that $\dist(\lambda^\downarrow(y^k), \mathcal{C})=0$ holds for $k\geq 1$ by construction).
In Figure~\ref{fig:ellip} we demonstrate the numerical results.
\begin{itemize}
\item The case $\ell=1$ in \eqref{eq:ei} corresponds to the problem of finding a boundary point on the intersection of given $m$ ellipsoids. Figure \ref{fig:ellip} shows the behavior of $\{x^k\}$ from various initial points. One can see that the trajectory $\{x^k\}$ successfully converges to a point on the boundary.
\item The case $\ell=2$ in \eqref{eq:ei} corresponds to finding a ``degenerate'' boundary point  of the intersection of given $m$ ellipsoids. Figure \ref{fig:ellip} illustrates that $\{x^k\}$ successfully converges to a desired point except the one starting from the origin. We remark that the origin yields a stationary point for the corresponding problem \eqref{eq:feas-min} in this case.
\end{itemize}

\begin{figure}[htbp]
  \begin{minipage}{0.5\hsize}
  \begin{center}
   \includegraphics[width=75mm]{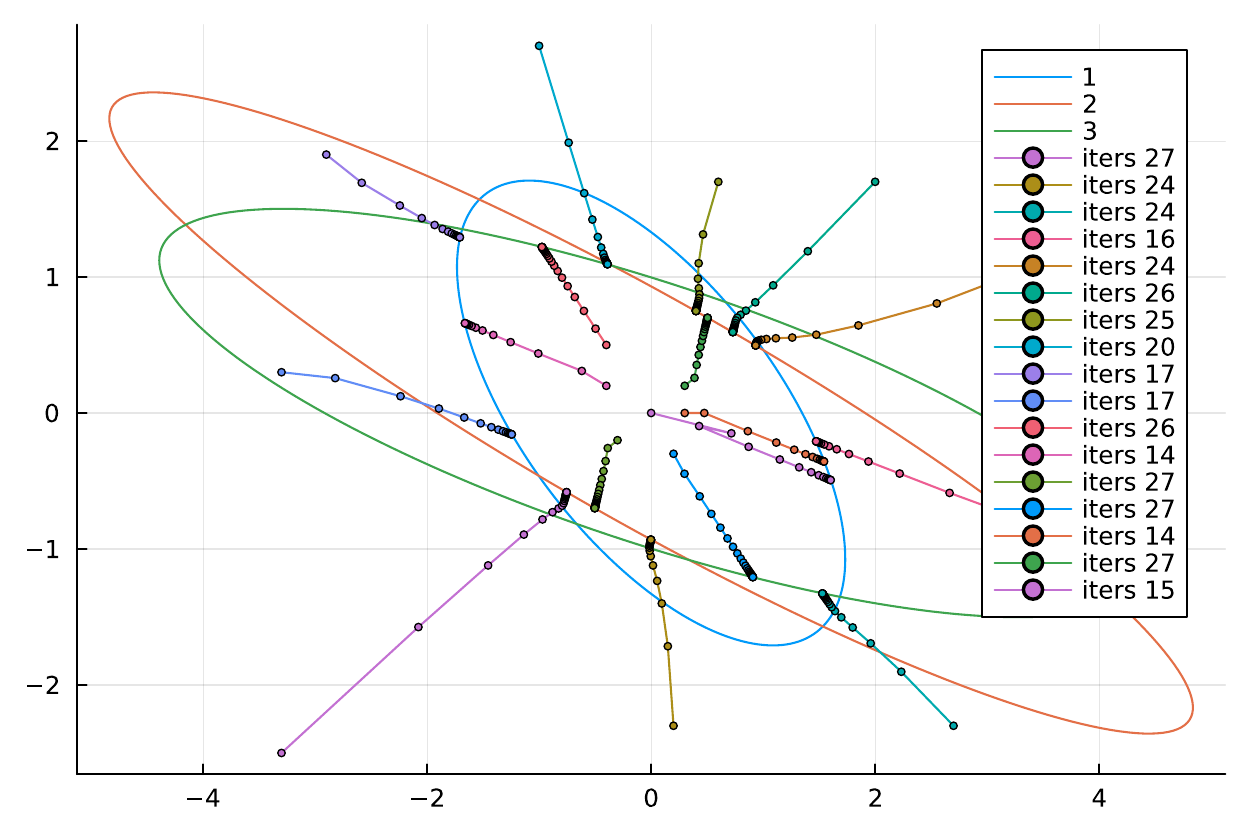}
  \end{center}
 \end{minipage}
 \begin{minipage}{0.5\hsize}
  \begin{center}
   \includegraphics[width=75mm]{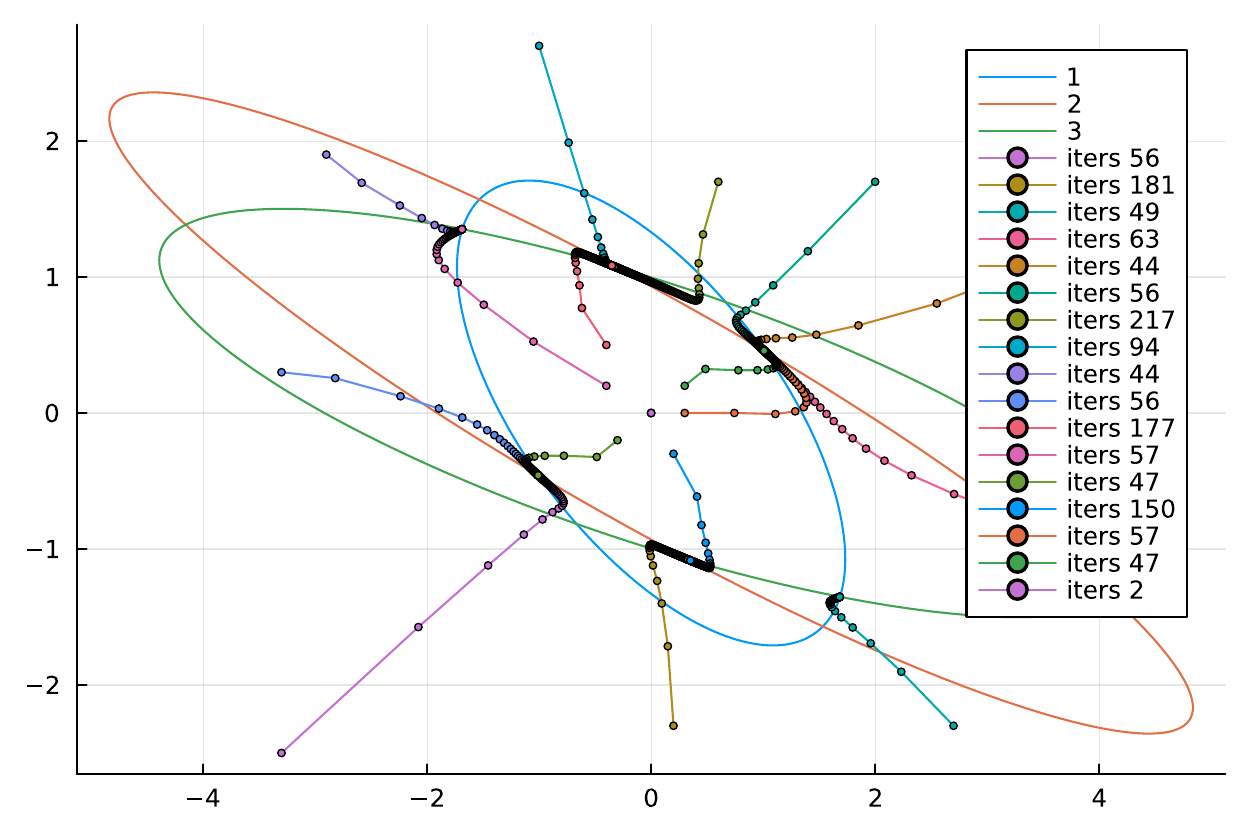}
  \end{center}
 \end{minipage}
  \caption{Illustration of the trajectory $\{x^k\}$ from various initial points for the problem \eqref{eq:ei} with $n=2$ and $m=3$. The left and right figures correspond to the cases for $\ell=1$ and $\ell=2$, respectively. Each line except the ellipsoid is a trajectory of the proposed method where `iters' shows the number of iterations.}
  \label{fig:ellip}
\end{figure}

\section{Concluding remarks}\label{sec:conc}
In this work, using the FTvN system framework developed by Gowda \cite{G19}, we discussed eigenvalue programs beyond the usual matrix setting. 
We also analyzed and implemented a simple projected gradient algorithm, see Algorithm~\ref{alg:project}. 
Finally, we also showed  some applications such as general inverse eigenvalue problems. 
In the particular case where the FTvN system comes from the Euclidean Jordan algebra of second order cones, we showed that it is possible to express the so-called \emph{vanishing quadratic constraints}, which is useful to obtain boundary points of the intersection of ellipsoids with certain degeneracy properties.

The topics we discussed in this paper are still in their infancy and much remains to be done from both modelling 
and algorithmic perspectives. We also mention in passing that \cite{G19} and \cite[Section~4]{JJ23} contain many other examples of 
FTvN systems, including ones corresponding to complete isometric hyperbolic polynomials \cite{BGLS01}  and FTvN systems constructed from infinite-dimensional vector spaces.
One particular direction where many fruitful results may yet be found is related to rank problems involving second-order cones. As problems over second order cones are typically more scalable than matrix problems, fitting a particular modelling application that requires some rank-like constraint into this setting may lead to better performance when compared with a model that uses matrices.

While the writing of this paper was in the final stages, Garner, Lerman and Zhang \cite{GLZ23} released a preprint that is very close to the spirit of what we propose in this paper.
There too they consider problems with general constraints  on eigenvalues and analyze methods based on the idea of solving subproblems on the space of eigenvalues.
That said, one important difference between the work \cite{GLZ23} and our approach is that our discussion uses the FTvN framework while \cite{GLZ23} only consider problems over matrices. 
In particular, the applications involving the second-order cone discussed here seem to be out of the scope of \cite{GLZ23}.

\section*{Acknowledgements}
We thank the referees for their  comments, which helped to improve the paper.

\bibliographystyle{abbrvurl}
\bibliography{bib}
\end{document}